\newtheorem{thm}{Theorem}
\newtheorem{proposition}[thm]{Proposition}
\newtheorem{lemma}[thm]{Lemma}
\newtheorem{cor}[thm]{Corollary}
\theoremstyle{definition}
\newtheorem*{defin}{Definition}
\newtheorem{rem}[thm]{Remark}
\newtheorem{question}{Question}
\newcommand\setsep{;\ }
\def\dens{\operatorname{dens}}
\def\sspan{\operatorname{span}}
\def\ker{\operatorname{Ker}}
\def\eps{\varepsilon}
\def\restriction{\!\upharpoonright}
\def\M{\mathcal{M}}
\def\F{\mathcal F}
\def\qe{\mathbb Q}
\def\en{\mathbb N}
\def\P{\mathcal P}
\def\C{\mathcal{C}}
\def\ov{\overline}
\def \rng {\operatorname{Rng}}
\def \dom {\operatorname{Dom}}
\begin{document}
\title{Rich families and elementary submodels}
\author{Marek C\'uth and Ond\v{r}ej F.K. Kalenda}
\address{Department of Mathematical Analysis, Faculty of Mathematics and Physic, Charles University,
Sokolovsk\'{a} 83, Praha 8, 186 \ 75, Czech Republic}
\address{Department of Mathematical Analysis, Faculty of Mathematics and Physic, Charles University,
Sokolovsk\'{a} 83, Praha 8, 186 \ 75, Czech Republic}
\email{marek.cuth@gmail.com}
\email{kalenda@karlin.mff.cuni.cz}
\subjclass[2010]{46B26, 03C30}
\keywords{elementary submodel, separable reduction, projectional skeleton, rich family}
\begin{abstract} We compare two methods of proving separable reduction theorems in functional analysis
-- the method of rich families and the method of elementary submodels. We show that any result proved using rich families holds also when formulated with elementary submodels and the converse is true in spaces with fundamental minimal system an in spaces 
of density $\aleph_1$. We do not know whether the converse is true in general. 
We apply our results to show that a projectional skeleton may be without loss of generality indexed by ranges of its projections.\end{abstract}
\maketitle

\section{Introduction}

For the study of nonseparable Banach spaces, construction of a separable subspace with certain property is sometimes important. It enables us to transfer properties from smaller (separable) spaces to larger ones.

One of the important approaches is the ``separable reduction''. By a separable reduction we usually mean the possibility to extend the validity of a statement from separable spaces to the nonseparable setting without knowing the proof of the statement in the separable case. This method has been used e.g. in \cite{cuth, cuthrmoutil, fabRedukce, ioffe, moors1, moors2, tiser, zajicDif}. The proof of separable reduction theorems depends on a ``separable determination'': a statement $\phi$ concerning a nonseparable Banach space $X$ is here considered to be \emph{separably determined} if
$$\text{The statement }\phi\text{ holds in $X$ }\Longleftrightarrow \forall F\in\F:\;\text{The statement }\phi\text{ holds in $F$},$$
where $\F$ is a sufficiently large family of separable subspaces of $X$; typically, for any separable subspace of $X$ there is a bigger subspace from $\F$. Note that in the literature sometimes a statement is considered to be ``separably determined'' if only the implication from the left to the right above holds. We refer to a nice introduction from \cite{fabRedukce}, where more details about the history (and not only that) may be found. 

Although in applications one makes the final deduction using just one separable subspace, it is convenient to know that the family $\F$ is large in order to join finitely many arguments together. Hence, given a statement $\phi$, we are trying to construct a large family of separable subspaces $\F$ so that the above holds for $\phi$. There are several approaches to this. One of them is the concept of \emph{rich families} introduced in \cite{borwein} by Borwein, Moors and further used e.g. in \cite{moors1, moors2, tiser, zajicDif}.

\begin{defin}Let $X$ be a Banach space. A family $\F$ of separable subspaces of $X$ is called {\em rich} if
\begin{enumerate}[\upshape (i)]
	\item each separable subspace of $X$ is contained in an element of $\F$ and
	\item for every increasing sequence $F_i$ in $\F$, $\overline{\bigcup_{i=1}^\infty F_i}$ belongs to $\F$.
\end{enumerate}
\end{defin}

Another concept is the ``method of suitable models'' (or method of elementary submodels) used in \cite{cuth, cuthrmoutil, cuthRmoutilZeleny}. The class $\F$ then consists of spaces of the form $\ov{X\cap M}$, where $M$ is a suitable model; i.e. a set for which certain finite list of formulas is absolute and which contains some countable set given in advance. We refer to the next section and \cite{cuth}, where more details may be found.

In the present paper we investigate the relationship between these two methods. This relationship is summed up in Theorem \ref{tRichAModel} which says that a statement which is separably determined by the method of rich families is also separably determined by the method of suitable models; under certain additional conditions the converse holds as well. However, 
we do not know whether these two methods are equivalent in general. It is not clear even in case $X = \C(K)$ where $K$ is a Boolean space (i.e. zero-dimensional Hausdorff compact space); see Question \ref{q3}.

A key tool to prove the mentioned theorem is a procedure of creating a rich family of separable subspaces, where every space from the family is of the form $\ov{X\cap M}$ for a suitable model; see Theorem \ref{tRich}. It seems to be a challenging problem whether the assumptions in Theorem \ref{tRich} are really needed; see Question \ref{q2}.

We apply the main theorem to clarify and simplify the definion of a projectional skeleton. This notion was introduced by W. Kubi\'s in \cite{kubis} and further investigated in \cite{cuthCMUC, cuthSimul, ferrer, kubisSkeletonEkviv}. Let us give the original definition.

\begin{defin}A \textit{projectional skeleton} in a Banach space $X$ is a family of bounded projections $\{P_s\}_{s\in\Gamma}$ indexed by an up-directed partially ordered set $\Gamma$ satisfying the following conditions:
\begin{enumerate}[\upshape (i)]
	\item Each $P_s X$ is separable.
	\item $X = \bigcup_{s\in\Gamma}P_s X$.
	\item $s\leq t \Rightarrow P_s = P_s\circ P_t = P_t\circ P_s.$
	\item Given $s_1 < s_2 < \cdots$ in $\Gamma$, $t = \sup_{n\in\omega}s_n$ exists and $P_t X = \ov{\bigcup_{n\in\en}P_{s_n}X}$.
\end{enumerate}
Given $r\geq 1$, we say that $\{P_s\}_{s\in\Gamma}$ is an \textit{r-projectional skeleton} if it is a projectional skeleton such that $\|P_s\|\leq r$ for every $s\in\Gamma$.

We say that $\{P_s\}_{s\in\Gamma}$ is a \textit{commutative projectional skeleton} if $P_s\circ P_t = P_t\circ P_s$ for any $s,t\in\Gamma$.
\end{defin}

The notion of a projectional skeleton is an important tool to study nonseparable Banach spaces since it provides 
a decomposition of such a space to separable pieces. There is a hope that if we glue them together, their properties will be preserved by the nonseparable Banach space we started with. It is known that inductive arguments work well in a space with a projectional skeleton. Consequently, every space with a projectional skeleton has a strong Markushevich basis and an LUR renorming; see the introduction of \cite{cuthSimul} for more details.

One can ask, what is the mysterious index set $\Gamma$ in the definition of a projectional skeleton above. It is claimed already in \cite{kubisSkeletonEkviv} that we may always assume that the projectional skeleton is indexed by ranges of its projections. This statement is used also in \cite{ferrer}. However, the proof is not complete and we fill in the gap in the last section of this paper; see Theorem \ref{tSkeleton}. We give two proofs of Theorem \ref{tSkeleton} - one uses the results above and one is absolutely self-contained.\\

Let us recall the most relevant notions, definitions and notations:
We denote by $\omega$ the set of all natural numbers (including $0$), by $\en$ the set $\omega\setminus\{0\}$. Whenever we say that a set is countable, we mean that the set is either finite or infinite and countable. If $A$ is a set, we denote by $[A]^{\leq\omega}$ the set of all its countable subsets. If $f$ is a mapping then we denote by $\rng f$ the range of $f$ and by $\dom f$ the domain of $f$. By writing $f:X\to Y$ we mean that $f$ is a mapping with $\dom f = X$ and $\rng f \subset Y$. By the symbol $f\restriction_{Z}$ we denote the restriction of the mapping $f$ to the set $Z$.

We shall consider Banach spaces over the field of real numbers (but many results hold for complex spaces as well). $B_X$ is the unit ball in $X$; i.e., the set $\{x\in X:\setsep \|x\| \leq 1\}$. $X^*$ stands for the (continuous) dual space of $X$. For a set $A\subset X^*$ we write $A_\bot = \{x\in X\setsep (\forall a\in A)\;a(x) = 0\}$. If $\{x_i\}_{i\in I}$ is a family of vectors in the Banach space $X$, we denote by $[x_i\setsep i\in I]$ the closed linear hull of $\{x_i\}_{i\in I}$. A set $D\subset X^*$ is \textit{r-norming} if, for every $x\in X$, $\|x\| \leq r. \sup\{|x^*(x)|:\;x^*\in D\cap B_{X^*}\}$.

\section{Rich families generated by suitable models}

In this section we give a method of generating rich families of separable subspaces with certain additional properties. The method is based on the proof of the famous L\"owenheim--Skolem Theorem; see e.g. \cite[Chapter IV Theorem 7.8]{kunen}. In Theorem \ref{tRich} we show that if $X$ is a Banach space with a fundamental minimal system or with $\dens X = \aleph_1$, then the method generates rich families of separable subspaces. However, it is not known to the authors whether the condition on a Banach space $X$ is necessary.

Let us first recall some definitions:

Let $N$ be a fixed set and $\varphi$ a formula in the language of $ZFC$. Then the {\em relativization of $\varphi$ to $N$} is the formula $\varphi^N$ which is obtained from $\varphi$ by replacing each quantifier of the form ``$\forall x$'' by ``$\forall x\in N$'' and each quantifier of the form ``$\exists x$'' by ``$\exists x\in N$''.

If $\varphi(x_1,\ldots,x_n)$ is a formula with all free variables shown (i.e., a formula whose free variables are exactly $x_1,\ldots,x_n$) then {\em $\varphi$ is absolute for $N$} if and only if
$$\forall a_1,\ldots,a_n\in N\quad (\varphi^N(a_1,\ldots,a_n) \leftrightarrow \varphi(a_1,\ldots,a_n)).$$

A list of formulas, $\varphi_1,\ldots,\varphi_n$, is said to be {\em subformula closed} if and only if every subformula of a formula in the list is also contained in the list.

Any formula in the set theory can be written using symbols $\in,=,\wedge,\vee,\neg,\rightarrow,\leftrightarrow,\exists,(,),[,]$ and symbols for variables. Let us assume a subformula closed list of formulas $\varphi_1,\ldots,\varphi_n$ is written in this way. Then it is not difficult to show, that the absoluteness of $\varphi_1,\ldots,\varphi_n$ for $N$ in other words says, that those formulas don't create any new sets in $N$. This result is contained in the following lemma (a proof can be found in \cite[Chapter IV, Lemma 7.3]{kunen}):

\begin{lemma}\label{lKunen}
 Let $N$ be a set and $\varphi_1,\ldots,\varphi_n$ subformula closed list of formulas (formulas containing only symbols $\in,=,\wedge,\vee,\neg,\rightarrow,\leftrightarrow,\exists,(,),[,]$ and symbols for variables). Then the following are equivalent:
  \begin{itemize}
    \item[(i)] $\varphi_1,\ldots,\varphi_n$ are absolute for $N$
    \item[(ii)] Whenever $\varphi_i$ is of the form $\exists x\varphi_j(x,y_1,\ldots y_l)$ (with all free variables shown), then
  \end{itemize}
  $$\quad\forall y_1,\ldots y_l\in N\left[\exists x\ (\varphi_j(x,y_1,\ldots y_l))\rightarrow(\exists x\in N)(\varphi_j(x,y_1,\ldots y_l))\right]$$
\end{lemma}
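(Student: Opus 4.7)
My plan is to prove the equivalence by induction on formula complexity, using the subformula-closure hypothesis to ensure that the inductive hypothesis applies to every subformula we need. Order the list $\varphi_1,\ldots,\varphi_n$ so that every formula appears after all of its proper subformulas; this is possible precisely because the list is subformula closed.

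For the direction (i)$\Rightarrow$(ii), I would just unfold definitions. Assume each $\varphi_k$ is absolute for $N$, and suppose $\varphi_i$ has the form $\exists x\,\varphi_j(x,y_1,\ldots,y_l)$. Given $y_1,\ldots,y_l\in N$ with $\exists x\,\varphi_j(x,\bar y)$, i.e.\ $\varphi_i(\bar y)$, absoluteness of $\varphi_i$ gives $\varphi_i^N(\bar y)$; by definition of relativization this is $(\exists x\in N)\,\varphi_j^N(x,\bar y)$, and then absoluteness of $\varphi_j$ (which is in the list by subformula closure) upgrades $\varphi_j^N(x,\bar y)$ to $\varphi_j(x,\bar y)$, producing a witness inside $N$.

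The main content is (ii)$\Rightarrow$(i). I would induct along the ordered list. The atomic cases $x\in y$ and $x=y$ are absolute because relativization does not touch them. For the propositional connectives $\wedge,\vee,\neg,\rightarrow,\leftrightarrow$, relativization commutes with the connective, and the inductive hypothesis applied to the immediate subformulas (which lie earlier in the list by subformula closure) gives absoluteness of the combination. The one case that uses hypothesis (ii) is $\varphi_i=\exists x\,\varphi_j(x,\bar y)$: if $\varphi_i^N(\bar y)$ holds then a witness $x\in N$ satisfies $\varphi_j^N(x,\bar y)$, and induction turns this into $\varphi_j(x,\bar y)$, hence $\varphi_i(\bar y)$; conversely, if $\varphi_i(\bar y)$ holds then (ii) supplies a witness $x\in N$ with $\varphi_j(x,\bar y)$, and induction gives $\varphi_j^N(x,\bar y)$, so $\varphi_i^N(\bar y)$.

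There is no real obstacle. The only thing one must be careful about is the bookkeeping: subformula closure is exactly what allows the induction to feed itself, because every subformula invoked in the inductive step (whether as a component of a boolean combination or as the matrix $\varphi_j$ of an existential) is itself a member of the list and therefore covered by the inductive hypothesis. Since the paper restricts to the primitive connectives and only $\exists$ (with $\forall$ expressible via $\neg\exists\neg$), no further cases arise.
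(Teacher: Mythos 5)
Your proof is correct and is essentially the standard argument: the paper does not prove this lemma itself but cites \cite[Chapter IV, Lemma 7.3]{kunen}, and your induction on a subformula-respecting ordering of the list, with (ii) used exactly once in the existential step, is precisely the proof found there. The only cosmetic point is that ordering a finite list by complexity is always possible; subformula closure is needed (as you correctly note later) to guarantee that the subformulas invoked by the inductive hypothesis actually belong to the list.
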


The method of creating rich families is based on the proof of the L\"owenheim--Skolem Theorem and on the following statement (for a proof see e.g. \cite[Chapter IV, Theorem 7.8]{kunen}).

\begin{thm}\label{tCountModel}
 Let $\varphi_1,\ldots,\varphi_n$ be any formulas and $Y$ any set. Then there exists a set $M\supset Y$ such, that
 $$(\varphi_1,\ldots,\varphi_n \text{ are absolute for }M)\quad \wedge\quad (|M|\leq \max(\omega,|Y|)).$$
\end{thm}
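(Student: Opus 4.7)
My plan is to mimic the standard downward Löwenheim--Skolem construction and verify the two conclusions via Lemma~\ref{lKunen}.

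First, I would enlarge the given list of formulas to a subformula closed list $\psi_1,\ldots,\psi_N$; this is still finite and any absoluteness for this larger list will in particular give absoluteness for the original $\varphi_1,\ldots,\varphi_n$, so it suffices to build $M$ for the enlarged list. Next, for each $\psi_i$ that has the form $\exists x\,\psi_j(x,y_1,\ldots,y_{l_i})$, I would fix, using the axiom of choice, a Skolem function $H_i$ defined on all tuples $(y_1,\ldots,y_{l_i})$ such that
$$\exists x\,\psi_j(x,y_1,\ldots,y_{l_i}),$$
choosing $H_i(y_1,\ldots,y_{l_i})$ to be some witness $x$. For tuples with no witness I would leave $H_i$ undefined (condition (ii) of Lemma~\ref{lKunen} is then vacuous on them).

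Then I would build $M$ by recursion. Set $M_0$ to be any set with $Y\subseteq M_0$ and $|M_0|\leq \max(\omega,|Y|)$ (adding countably many elements if $Y$ happens to be empty, to keep the values $H_i$ defined and to guarantee the final cardinality bound is at least $\omega$). Having $M_k$, define
$$M_{k+1}\ =\ M_k\ \cup\ \bigcup_{i}\bigl\{H_i(y_1,\ldots,y_{l_i}):(y_1,\ldots,y_{l_i})\in M_k^{l_i}\cap \dom H_i\bigr\},$$
where $i$ ranges over the indices of existential formulas in the list. Finally set $M:=\bigcup_{k<\omega}M_k$.

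For the cardinality bound, at each step $|M_{k+1}|\leq |M_k|+N\cdot |M_k|^{<\omega}=\max(\omega,|M_k|)$, so by induction $|M_k|\leq\max(\omega,|Y|)$ for every $k$, and hence $|M|\leq\max(\omega,|Y|)$. For absoluteness I would apply Lemma~\ref{lKunen}: given an existential $\psi_i=\exists x\,\psi_j$ and parameters $y_1,\ldots,y_{l_i}\in M$, all parameters lie already in some $M_k$; if a witness in $V$ exists, then $H_i(y_1,\ldots,y_{l_i})\in M_{k+1}\subseteq M$ is such a witness in $M$, verifying condition (ii) of the lemma. The only real subtlety is the use of the global AC to produce the Skolem functions $H_i$ in $V$; once this is in hand, both the cardinality and the absoluteness are essentially bookkeeping.
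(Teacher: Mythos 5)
The paper does not actually prove Theorem~\ref{tCountModel}; it cites Kunen (Chapter IV, Theorem 7.8), whose proof is exactly the route you take: close the list under subformulas, build a Skolem hull in $\omega$ stages, bound the cardinality, and verify absoluteness via the Tarski--Vaught criterion of Lemma~\ref{lKunen}. Your subformula-closure reduction, the cardinality computation $|M_{k+1}|\leq\max(\omega,|M_k|)$, and the verification of condition (ii) of Lemma~\ref{lKunen} are all correct.

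The one genuine gap is the step you flag and then dismiss as bookkeeping: producing the Skolem functions $H_i$. As written, each $H_i$ must be defined on all tuples from the universe and must select a witness from the class $\{x : \psi_j(x,y_1,\ldots,y_{l_i})\}$, which is in general a proper class; ordinary AC only provides choice functions on sets, and a global choice function on $V$ is not available in ZFC. The standard repair, and the one Kunen uses, is to first apply the Reflection Theorem to obtain a \emph{set} $R\supseteq Y$ (some $V_\alpha$) for which $\varphi_1,\ldots,\varphi_n$ are absolute, then fix a well-ordering $\triangleleft$ of $R$ and take $\triangleleft$-least witnesses inside $R$; the entire hull construction then lives inside $R$, and since the existential formulas are in the subformula-closed list and absolute for $R$, any witness existing in $V$ yields one in $R$, so condition (ii) of Lemma~\ref{lKunen} still holds for $M$ with the unrelativized quantifier. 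This is precisely the setup the paper adopts immediately after Theorem~\ref{tCountModel}, where its Skolem function $\psi$ is defined relative to such a set $R$ and well-ordering $\triangleleft$. (Alternatively, Scott's trick --- restricting to witnesses of minimal rank and applying AC stage by stage to the resulting set-indexed family --- also works.) With that replacement your argument goes through verbatim.
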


The set with the properties from previous theorem will be often used throughout the paper. Therefore we will use the following definition.

\begin{defin}
Let $\varphi_1,\ldots,\varphi_n$ be any formulas and let $Y$ be any countable set. Let $M\supset Y$ be a countable set satisfying that $\varphi_1,\ldots,\varphi_n$ are absolute for $M$. Then we say that {\em $M$ is a suitable model for $\varphi_1,\ldots,\varphi_n$ containing $Y$}. We denote this by $M\prec(\varphi_1,\ldots,\varphi_n;\; Y)$.

If $X$ is a topological space and $M\prec(\varphi_1,\ldots,\varphi_n;\; Y)$, we denote by $X_M$ the set $\ov{X\cap M}$.
\end{defin}

We will repeatedly use some results from \cite{cuth} which are summed up in the following lemma; see \cite[Proposition 2.9, 2.10 and 3.2]{cuth}.

\begin{lemma}\label{l:predp}
There are formulas $\theta_1,\dots,\theta_m$ and a countable set $Y_0$ such that any $M\prec(\theta_1,\ldots,\theta_n;\; Y_0)$ satisfies the following conditions:
\begin{itemize}
	\item If $f\in M$ is a mapping, then $\dom(f)\in M$, $\rng(f)\in M$ and $f[M]\subset M$.
	\item If $A\in M$ is a countable set, then $A\subset M$.
	\item If $\langle X,+,\cdot\rangle\in M$ is a vector space, then $X\cap M$ is a $\qe$-linear subspace of $X$.
\end{itemize}
\end{lemma}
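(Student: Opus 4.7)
The plan is to apply Lemma \ref{lKunen} to a carefully chosen finite, subformula-closed list of formulas $\theta_1,\ldots,\theta_m$ together with a countable seed set $Y_0$. Each of the three conclusions has the form ``from some data in $M$, the natural witness of a $\mathrm{ZFC}$-definable existential statement must lie in $M$,'' which is precisely what condition (ii) of Lemma \ref{lKunen} delivers; the main work is therefore to choose enough formulas and enough seed data so that every quantifier appearing in the argument is governed by an absolute formula on the list.

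For the first bullet I would include the formulas ``$y=\dom(f)$'', ``$y=\rng(f)$'' and ``$z=f(x)$'', together with their existential closures; absoluteness applied to these (true) statements forces $\dom(f)$, $\rng(f)$ and every value $f(x)$ with $x\in M\cap\dom f$ to lie in $M$. For the second bullet I would include ``$\exists g\,(g\text{ is a surjection from }\omega\text{ onto }A)$'', which is the $\mathrm{ZFC}$ definition of countability. For the third bullet I would include ``$\exists z\,(z=x_1+x_2)$'' and ``$\exists z\,(z=\lambda\cdot x)$'' expressing the vector space operations on $\langle X,+,\cdot\rangle\in M$. Closing this list under subformulas introduces many auxiliary formulas (``$p$ is an ordered pair'', ``$f$ is a function'', ``$\langle X,+,\cdot\rangle$ is a vector space'', etc.), but the resulting list is still finite.

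For the seed I would take $Y_0$ to contain $\emptyset$, $\omega$, $\qe$, the successor operation on $\omega$ and the field operations on $\qe$. To deduce the second bullet I first need $\omega\subset M$: iterating absoluteness of ``$y=x\cup\{x\}$'' starting from $\emptyset\in M$ places every finite ordinal in $M$. Then for any countable $A\in M$ there is a surjection $g\colon\omega\to A$ with $g\in M$, and $A=g[\omega]\subset g[M]\subset M$ by the first bullet. A parallel bootstrap using a fixed enumeration $\omega\to\qe$ placed in $Y_0$ yields $\qe\subset M$, and absoluteness of the vector operations then gives closure of $X\cap M$ under addition and rational scalar multiplication, i.e.\ the third bullet.

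The main obstacle is not mathematical depth but bookkeeping: the list must genuinely be subformula-closed (the hypothesis of Lemma \ref{lKunen}), every auxiliary existential hidden inside ``$f$ is a function'', ``$g$ is a surjection'' or ``$\langle X,+,\cdot\rangle$ is a vector space'' must be explicitly present, and the bootstrap arguments for $\omega\subset M$ and $\qe\subset M$ must not silently invoke any operation whose absoluteness has not been arranged. This careful verification is precisely what is carried out in \cite[Propositions 2.9, 2.10 and 3.2]{cuth}, which one can cite directly.
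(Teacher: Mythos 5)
Your proposal is correct and matches the paper's approach: the paper gives no proof of this lemma, simply citing \cite[Propositions 2.9, 2.10 and 3.2]{cuth}, and your sketch is exactly the standard absoluteness/bootstrap argument carried out there (ending with the same citation). The details you flag as bookkeeping (subformula closure, seeding $\emptyset$, $\omega$, $\qe$ and the relevant operations into $Y_0$, and deducing $\omega\subset M$ and $\qe\subset M$ before using them) are indeed the only points requiring care, and you handle them correctly.
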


In the sequel we will always assume that the formulas from the previous lemma are contained in the respective list of formulas and that the set $Y_0$ is contained in the respective countable set.

We will try to find out conditions under which suitable models generate nice rich families in the following sense.

\begin{defin}Let $X$ be a Banach space. We say that \emph{suitable models generate nice rich families in $X$}, if the following holds:
\begin{quotation}Whenever $Y$ is a countable set and $\varphi_1,\ldots,\varphi_n$ is a list of formulas, there exists a family $\M$ satisfying the following conditions:
\begin{enumerate}[\upshape (i)]
	\item For every $M\in\M$, $M\prec(\varphi_1,\ldots,\varphi_n;Y)$.
	\item The set $\{X_M\setsep M\in\M\}$ is a rich family of separable subspaces in $X$.
	\item $\forall M,N\in\M:\quad M\subset N \Longleftrightarrow \ov{X\cap M}\subset \ov{X\cap N}$.
\end{enumerate}\end{quotation}
\end{defin}

The main obstacle is to find a family $\M$ of suitable models such that $\{X_M\setsep M\in\M\}$ covers the space $X$ and the condition (iii) in the definition above is satisfied.

First, we recall the prescription on creating suitable models using fixed ``Skolem function'' (the prescription comes from the proof of the L\"owenheim--Skolem Theorem). Sets created with a fixed Skolem function will be used later in order to find the family $\M$ of suitable models.

Let us have a subformula closed list of formulas $\varphi_1,\ldots,\varphi_n$ and a set $R$ such that $\varphi_1,\ldots,\varphi_n$ are  absolute for $R$. Fix some well-ordering $\triangleleft$ on the set $R$. Now, for the given list of formulas $\varphi_1,\ldots,\varphi_n$, set $R$ and well-ordering $\triangleleft$, we define the \emph{Skolem function} $\psi$ \emph{for $\varphi_1,\ldots,\varphi_n$, $R$ and $\triangleleft$} in the following way:\\

\begin{itemize}
\item[1.] For every $i\in\{1,\ldots,n\}$, we denote by $l_i$ the number of all the free variables in the formula $\varphi_i$ and we define a mapping $H_i:R^{l_i}\to R$ in the following way:
\begin{itemize}
	\item if $l_i = 0$, then $R^{l_i} = \{\emptyset\}$ and $H_i(\emptyset)$ is the $\triangleleft$-least element of $R$.
	\item if $l_i > 0$ and $(r_1,\ldots,r_{l_i})\in R^{l_i}$ is fixed, then:
		\begin{itemize}
			\item[-] if $\varphi_i = \exists x\varphi_j(x,v_1,\ldots,v_{l_i})$ for some formula $\varphi_j$ and if there is some $x\in R$ such that $\varphi_j(x,r_1,\ldots,r_{l_i})$ holds, then  then $H_i(r_1,\ldots,r_{l_i})$ is the $\triangleleft$-least of such elements.		
			\item[-] in all the other cases, $H_i(r_1,\ldots,r_{l_i})$ is the $\triangleleft$-least element of $R$.
		\end{itemize}
\end{itemize}
\item[2.] Now, we define $\psi:[R]^{\leq\omega}\to [R]^{\leq\omega}$ in the following way:
\begin{itemize}
	\item Fix a countable set $A\subset R$ and put $A_0 = A$.
	\item Having $A_k$, we define $A_{k+1}$ by
$$A_{k+1} = A_k\cup \bigcup\{H_i(a_1,\ldots,a_{l_i})\setsep i = 1,\ldots,n\;,(a_1,\ldots,a_{l_i})\in (A_k)^{l_i}\}.$$
	\item Now, we define $\psi(A) = \bigcup_{k\in\omega}A_k$.
\end{itemize}
\end{itemize}

\begin{lemma}\label{lBasicSkolem}
 Let $\psi$ be a Skolem function for $\varphi_1,\ldots,\varphi_n$, $R$ and $\triangleleft$. Then
 \begin{enumerate}[\upshape (i)]
	\item For every $A\in[R]^{\leq\omega}$, $\psi(A)$ is a countable set, $\varphi_1,\ldots,\varphi_n$ are absolute for $\psi(A)$ and $\psi(A)\supset A$.
	\item The mapping $\psi$ is idempotent; i.e. $\psi\circ \psi = \psi$.
	\item The mapping $\psi$ is monotone; i.e. $\psi(A)\subset \psi(B)$ whenever $A, B\in[R]^{\leq\omega}$ are such that $A\subset B$.
	\item For every countable family $\F$ consisting of sets from $[R]^{\leq\omega}$,	$\psi(\bigcup\F) = \bigcup_{F\in\F}\psi(F)$.
	\item Let $J\subset R$ be an arbitrary set. Then, for every $A\in[J]^{\leq\omega}$ and $B\in[R]^{\leq\omega}$,
	$$\psi(A)\cap J\subset \psi(B) \Longleftrightarrow \psi(A)\subset \psi(B).$$

\end{enumerate}
\end{lemma}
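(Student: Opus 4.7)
The plan is to verify the five items one at a time, with item (v) following from (ii) and (iii). For (i), the countability of $\psi(A)$ follows by induction on $k$: if $A_k$ is countable then $A_{k+1}$ adds only countably many elements (at most $n$ of $H_i$, each defined on $A_k^{l_i}$), so $\psi(A)=\bigcup_k A_k$ is countable, and $\psi(A)\supset A_0=A$ by construction. For absoluteness, apply Lemma \ref{lKunen}: given $\varphi_i=\exists x\,\varphi_j(x,y_1,\ldots,y_{l_i})$ with $y_1,\ldots,y_{l_i}\in\psi(A)$ and $\exists x\,\varphi_j(x,y_1,\ldots,y_{l_i})$ true, choose $k$ with all $y_m\in A_k$. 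The absoluteness of $\varphi_i$ for $R$ produces a witness in $R$, so by the definition of $H_i$ the element $H_i(y_1,\ldots,y_{l_i})\in A_{k+1}\subset\psi(A)$ is itself a witness.

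For (iii), starting from $A_0=A\subset B=B_0$, an immediate induction on $k$ yields $A_k\subset B_k$, because the $H_i$ are the same functions in both constructions. For (ii), one inclusion follows from (i). Conversely, writing $\psi(\psi(A))=\bigcup_k B_k$ with $B_0=\psi(A)=\bigcup_j A_j$, induction gives $B_k\subset\psi(A)$: any finite tuple drawn from $B_k\subset\psi(A)$ is contained in a single $A_j$ (the sequence $(A_j)$ is increasing), so all $H_i$-values of that tuple land in $A_{j+1}\subset\psi(A)$. For (iv), the inclusion $\bigcup_{F\in\F}\psi(F)\subset\psi(\bigcup\F)$ is immediate from (iii); for the reverse, it suffices to handle the increasing case $\F=\{F_n\}$ (which is the form in which the property is used), setting $A=\bigcup_n F_n$ and writing $F_n^{(k)}$ for the $k$-th stage of the Skolem construction for $F_n$, and proving $A_k\subset\bigcup_n F_n^{(k)}$ by induction on $k$: a finite tuple in $A_k$ lies, by the inductive hypothesis together with monotonicity of $F_n^{(k)}$ in $n$ (from (iii)), in a single $F_n^{(k)}$, whence each $H_i$-value of the tuple lies in $F_n^{(k+1)}$.

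Item (v) is then the clean payoff. The direction $\Leftarrow$ is trivial. For $\Rightarrow$, from $A\subset J$ and $A\subset\psi(A)$ we obtain $A=A\cap J\subset\psi(A)\cap J\subset\psi(B)$; applying (iii) gives $\psi(A)\subset\psi(\psi(B))$, and (ii) then yields $\psi(A)\subset\psi(B)$. The only real conceptual step is the absoluteness assertion in (i), where one must remember that the witnesses $H_i$ always land one stage higher than their arguments; the rest are bookkeeping inductions on the stages of the Skolem construction.
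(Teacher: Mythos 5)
Your write-up follows the same route as the paper's own proof: (ii) and (iii) by induction on the stages $A_k$, (i) via Lemma~\ref{lKunen} together with the definition of the $H_i$, and (v) verbatim (from $A=A\cap J\subset\psi(A)\cap J\subset\psi(B)$ deduce $\psi(A)\subset\psi(\psi(B))=\psi(B)$ using (iii) and (ii)). The one place you genuinely diverge is (iv), and your version is in fact the sound one. The paper's chain $\psi\bigl(\bigcup_{F\in\F}\psi(F)\bigr)=\bigcup_{F\in\F}\psi(\psi(F))$ is itself an instance of (iv) applied to the family $\{\psi(F)\}_{F\in\F}$, so the argument as printed is circular; worse, (iv) for an arbitrary countable family is false as soon as some $H_i$ has arity at least two (if $\varphi_i=\exists x\,\varphi_j(x,y_1,y_2)$ with $\varphi_j$ the pairing formula, then the pair of $a$ and $b$ lies in $\psi(\{a\}\cup\{b\})$ but not in $\psi(\{a\})\cup\psi(\{b\})$). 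Your restriction to increasing sequences, with the induction $A_k\subset\bigcup_n F_n^{(k)}$ using that every finite tuple from an increasing union lies in a single member, is exactly the repair needed, and it covers every use of (iv) in the paper (the chain $M_1\subset M_2\subset\cdots$ in Lemma~\ref{lRich} and the analogous step in the second proof of Theorem~\ref{tSkeleton}). One small refinement: in Lemma~\ref{lRich} the sets $A_n\cup Z$ themselves need not increase, only their images $\psi(A_n\cup Z)$ do; so either state (iv) for countable $\F$ with $\{\psi(F)\}_{F\in\F}$ up-directed, or apply your increasing-case version to the family $\{\psi(F)\}_{F\in\F}$ after invoking (ii). Apart from this point, which is a defect of the statement rather than of your proof, everything you wrote is correct.
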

\begin{proof}It follows immediately from the definition of the Skolem function $\psi$ that (ii) and (iii) holds. By Lemma \ref{lKunen} and the definition of the Skolem function $\psi$, (i) holds.

Let $\F$ be as in (iv). Then it follows from (iii) that $\psi(\bigcup\F) \supset \bigcup_{F\in\F}\psi(F)$. In order to prove the other inclusion, notice that $\bigcup\F\subset \bigcup_{F\in\F}\psi(F)$. By (iii) and (ii) respectively, $\psi(\bigcup\F)\subset \psi(\bigcup_{F\in\F}\psi(F)) = \bigcup_{F\in\F}\psi(\psi(F)) = \bigcup_{F\in\F}\psi(F)$. Thus, (iv) holds.

Let $J,A,B$ be as in (v). Then the implication from the right to the left is obvious. In order to prove the opposite one, let us suppose that $\psi(A)\cap J\subset \psi(B)$. By (i), $\psi(A)\cap J\supset A\cap J = A$. Hence, $\psi(B)\supset A$. Using (ii) and (iii), $\psi(B)\supset\psi(A)$.
\end{proof}

Under certain condition, the submodels created from the Skolem function above generate rich family of separable subspaces. The condition is contained in the following lemma. However, it is open to the authors whether this condition is necessary; see Question \ref{q1} bellow Lemma \ref{lRich}. We start with a definition.

\begin{defin}Let $\{x_i\}_{i\in I}$ be a family of vectors in the Banach space $X$.
The family of vectors $\{x_i\}_{i\in I}$ is said to be a \emph{fundamental minimal} system if $[x_i\setsep i\in I] = X$ and, for every $i\in I$, $x_i\notin [x_j\setsep j\in I, j\neq i]$.
\end{defin}

\begin{lemma}\label{lRich}Let $X$ be a Banach space. Let $\{x_i\}_{i\in I}$ be a system of vectors such that $[x_i\setsep i\in I] = X$ and let there exist a countable set $Y$ and formulas $\varphi_1,\ldots,\varphi_n$ such that for every $M\prec(\varphi_1,\ldots,\varphi_n;Y)$,
$$x_i\in[x_j\setsep j\in M\cap I]\Rightarrow i\in M.$$
Then suitable models generate nice rich families in $X$.
\end{lemma}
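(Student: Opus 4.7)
The plan is to parametrize the family $\M$ by countable subsets of the index set $I$ via a single fixed Skolem function, so that each $M\in\M$ is recoverable from $M\cap I$, and then use the hypothesis of the lemma to recover $M\cap I$ from the subspace $\overline{X\cap M}$.

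Given a countable set $Y'$ and formulas $\psi_1,\ldots,\psi_k$ for which we wish to build $\M$, extend the list to a list $\Phi$ containing these together with the given $\varphi_1,\ldots,\varphi_n$, the formulas from Lemma~\ref{l:predp}, and enough additional formulas so that the property ``$y\in X$ is a norm-limit of finite $\qe$-linear combinations of $\{x_i\setsep i\in I\}$'' is absolute. Put $Y_*:=Y'\cup Y\cup Y_0\cup\{X,I,f\}$, where $f\colon I\to X$ is defined by $f(i)=x_i$. By Theorem~\ref{tCountModel} pick a set $R\supset I\cup Y_*$ for which $\Phi$ is absolute, fix a well-ordering of $R$, and let $\psi\colon[R]^{\leq\omega}\to[R]^{\leq\omega}$ be the resulting Skolem function. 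Define
$$\M:=\{\psi(A\cup Y_*)\setsep A\in[I]^{\leq\omega}\}.$$
By Lemma~\ref{lBasicSkolem}(i) each $M\in\M$ is a suitable model for $\psi_1,\ldots,\psi_k$ containing $Y'$, which is condition~(i). Because $f\in M$ forces $x_j\in X\cap M$ for every $j\in M\cap I$ (via Lemma~\ref{l:predp}), and because the span formula is absolute for $M$, one obtains the key identity
$$\overline{X\cap N}=[x_j\setsep j\in N\cap I]\qquad\text{for every }N\in\M.\qquad(\ast)$$

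For richness~(ii), given a separable $F\subset X$, its countable dense set involves only countably many $x_i$'s, yielding $A\in[I]^{\leq\omega}$ with $F\subset[x_i\setsep i\in A]\subset X_M$ for $M:=\psi(A\cup Y_*)$. For a chain $M_1\subset M_2\subset\ldots$ in $\M$, first observe that every $M\in\M$ satisfies $M=\psi((M\cap I)\cup Y_*)$: writing $M=\psi(A\cup Y_*)$ with $A\subset I$, monotonicity and idempotence of $\psi$ (Lemma~\ref{lBasicSkolem}(ii),(iii)) give $M\subset\psi((M\cap I)\cup Y_*)\subset\psi(M)=M$. Thus $M_n=\psi(B_n\cup Y_*)$ for $B_n:=M_n\cap I$, and Lemma~\ref{lBasicSkolem}(iv) yields
$$\psi(\textstyle\bigcup_n B_n\cup Y_*)=\textstyle\bigcup_n\psi(B_n\cup Y_*)=\bigcup_n M_n\in\M,$$
whose associated closed subspace equals $\overline{\bigcup_n X_{M_n}}$.

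For condition~(iii) only the reverse implication is nontrivial. Write $B_M:=M\cap I$, $B_N:=N\cap I$, so by the previous paragraph $M=\psi(B_M\cup Y_*)$ and $N=\psi(B_N\cup Y_*)$. Assuming $\overline{X\cap M}\subset\overline{X\cap N}$, for each $i\in B_M$ identity~$(\ast)$ gives $x_i\in\overline{X\cap M}\subset\overline{X\cap N}=[x_j\setsep j\in B_N]$, so the hypothesis of the lemma forces $i\in N$. Hence $B_M\cup Y_*\subset N$; applying Lemma~\ref{lBasicSkolem}(v) with $J:=I\cup Y_*$ and $A:=B_M\cup Y_*\in[J]^{\leq\omega}$, the inclusion $M\cap J=B_M\cup Y_*\subset N$ upgrades to $M\subset N$. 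The main obstacle in this plan is arranging the absoluteness behind~$(\ast)$, i.e.\ ensuring that the assertion $X=[x_i\setsep i\in I]$ is captured by the formula list; once $(\ast)$ is secured, (ii) and~(iii) reduce to systematic bookkeeping with Lemma~\ref{lBasicSkolem}.
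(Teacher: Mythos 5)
Your proposal is correct and follows essentially the same route as the paper: parametrize the models as $\psi(A\cup Z)$ for $A\in[I]^{\leq\omega}$ via a fixed Skolem function, establish $\overline{X\cap M}=[x_i\setsep i\in M\cap I]$ by absoluteness together with Lemma~\ref{l:predp}, use the hypothesis of the lemma to pass from $X_M\subset X_N$ to $M\cap I\subset N$, and upgrade this to $M\subset N$ via Lemma~\ref{lBasicSkolem}(v), with richness under countable increasing unions coming from Lemma~\ref{lBasicSkolem}(iv). The only cosmetic difference is that the paper realizes the absoluteness behind your identity $(\ast)$ concretely through the formula $\exists A\in[I]^{\leq\omega}:x\in[x_i\setsep i\in A]$, which yields a countable witness $A\in M$, hence $A\subset M$.
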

\begin{proof} Let us define a mapping $f:I\to X$ by $f(i) = x_i$, $i\in I$. Without loss of generality we may suppose that $f\in Y$ and $[I]^{\le\omega}\in Y$. Suppose furhter that $Y$ contains the set $Y_0$ from Lemma~\ref{l:predp} and the list of formulas $\varphi_1,\dots,\varphi_n$ contains the formulas from Lemma~\ref{l:predp} and the formulas marked below by $(*)$ and their subformulas.

We will show that if $M\prec(\varphi_1,\ldots,\varphi_n;Y)$, then $X_M$ is a separable subspace of $X$ and $X_M = [x_i\setsep i\in M\cap I]$.

Indeed, let $M\prec(\varphi_1,\ldots,\varphi_n;Y)$ be arbitrary. By Lemma~\ref{l:predp} we have $f[I\cap M]\subset X\cap M$ and $X\cap M$ is a $\qe$-linear set. Thus, $X_M$ is a separable subspace of $X$ and $[x_i\setsep i\in M\cap I] \subset X_M$. 
Further, for any $x\in X\cap M$ we have
$$\exists A\in [I]^{\le\omega} : x\in [x_i\setsep i\in A]\eqno{(*)}$$
By absoluteness of this formula there is such an $A$ in $M$. By Lemma~\ref{l:predp} we have $A\subset M$. Hence, $X\cap M\subset [x_i\setsep i\in I\cap M]$ and $X_M\subset [x_i\setsep i\in I\cap M]$.

In order to verify that suitable models generate nice rich families in $X$, fix a countable set $Z$ and formulas $\phi_1,\ldots,\phi_k$. By adding countably many sets to $Z$ and by adding finitely many formulas to $\phi_1,\ldots,\phi_k$, we may assume that $Z\supset Y$ and all the formulas $\varphi_1,\ldots,\varphi_n$ are contained in $\phi_1,\ldots,\phi_k$.

By Theorem \ref{tCountModel}, there exists a set $R\supset Z\cup\P(X)$ such that $\phi_1,...,\phi_k$ are absolute for $R$. Fix a well-ordering $\triangleleft$ on $R$ and let $\psi$ be the Skolem function  for $\phi_1,\ldots,\phi_k$, $R$ and $\triangleleft$. Put 
$$\M:=\{\psi(A\cup Z)\setsep A\in[I]^{\leq\omega}\}.$$
By Lemma \ref{lBasicSkolem} 
(i), for every 
$M\in\M$, 
$M\prec(\phi_1,\ldots,\phi_k;Z)$. We claim that
$$\F = \{X_M\setsep M\in\M\}$$
is the rich family of subspaces we are looking for.

It is easy to see that for every separable subspace $T\subset X$ there exists $M\in\M$ such that $T\subset X_M$ (it is enough to take any countable set $D$ such that $[x_i\setsep i\in D]\supset T$ and put $M = \psi(D\cup Z)$).

Fix $M,N\in\M$. Obviously, $X_M\subset X_N$ whenever $M\subset N$. On the other hand, let us assume that $X_M\subset X_N$; hence, $[x_i\setsep i\in I\cap M]\subset [x_i\setsep i\in I\cap N]$. By the assumption, $M\cap I\subset I\cap N$. By Lemma \ref{lBasicSkolem} (v), $M\subset N$. Hence, $X_M\subset X_N$ if and only if $M\subset N$.

Let us have a chain of spaces from $\F$, $X_{M_1}\subset X_{M_2}\subset\ldots$ where $M_n\in\M$ for every $n\in\en$. By the above, $M_1\subset M_2\subset\ldots$. By Lemma \ref{lBasicSkolem} (iv), $\bigcup_{n\in\en}M_n\in\M$. Now, it is easy to verify (see e.g. \cite[Lemma 3.4]{cuth}) that $\ov{\bigcup_{i=1}^\infty X_{M_i}} = X_{\bigcup_{n\in\en}M_n}\in\F$. Thus, $\F$ is a rich family of separable subspaces. This finishes the proof.
\end{proof}

\begin{question}\label{q1}Let $X$ be a Banach space. In order to verify that ``suitable models generate nice rich families in $X$'', it is sufficient to verify that the system of vectors $\{x_i\}_{i\in I}$ from Lemma \ref{lRich} exists. Is this condition also necessary?
\end{question}

In the following we summarize two important cases when condition of Lemma \ref{lRich} is satisfied. We do not know an example of a Banach space $X$ where suitable models do not generate rich families in $X$.

\begin{thm}\label{tRich}Let $X$ be a Banach space. Let $X$ have a fundamental minimal system $\{x_i\}_{i\in I}$ or $\dens X = \aleph_1$. Then suitable models generate nice rich families in $X$.
\end{thm}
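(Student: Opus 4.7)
The plan is to reduce both cases to Lemma~\ref{lRich}: in each case I exhibit a total system $\{x_i\}_{i\in I}$ in $X$, a countable set $Y$, and formulas $\varphi_1,\ldots,\varphi_n$ so that for every $M\prec(\varphi_1,\ldots,\varphi_n;Y)$ one has $x_i\in[x_j\setsep j\in M\cap I]\Rightarrow i\in M$. In both cases the list of formulas is taken to contain those of Lemma~\ref{l:predp} and $Y\supseteq Y_0$.

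When $X$ has a fundamental minimal system $\{x_i\}_{i\in I}$, I take this system together with the map $i\mapsto x_i$ in $Y$. If $x_i\in[x_j\setsep j\in M\cap I]$ and $i\notin M$ then $M\cap I\subseteq I\setminus\{i\}$, so $x_i\in[x_j\setsep j\in I\setminus\{i\}]$, contradicting the minimality of the system. Hence the hypothesis of Lemma~\ref{lRich} holds automatically.

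When $\dens X=\aleph_1$, I first construct the system by transfinite recursion on $\omega_1$. Fix any dense set $\{z_\alpha\setsep\alpha<\omega_1\}$ in $X$. At stage $\alpha$, put $y_\alpha=z_\alpha$ if $z_\alpha\notin[y_\beta\setsep\beta<\alpha]$, and otherwise let $y_\alpha$ be an arbitrary element of $X\setminus[y_\beta\setsep\beta<\alpha]$; such an element exists because the latter space is separable while $\dens X=\aleph_1$. This yields $y_\alpha\notin[y_\beta\setsep\beta<\alpha]$ for every $\alpha$, and each $z_\alpha$ lies in $[y_\beta\setsep\beta\le\alpha]$, so $[y_\alpha\setsep\alpha<\omega_1]=X$.

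Now I let $Y$ additionally contain $\omega_1$ and the map $\alpha\mapsto y_\alpha$. For any suitable $M$, Lemma~\ref{l:predp} forces $M\cap\omega_1$ to be a transitive countable set of ordinals, hence equal to some countable ordinal $\gamma$. If $y_\alpha\in[y_\beta\setsep\beta\in M\cap\omega_1]=[y_\beta\setsep\beta<\gamma]$ and $\alpha\ge\gamma$, then $[y_\beta\setsep\beta<\gamma]\subseteq[y_\beta\setsep\beta<\alpha]$ would force $y_\alpha\in[y_\beta\setsep\beta<\alpha]$, contradicting the construction. Hence $\alpha<\gamma$, so $\alpha\in M$, and Lemma~\ref{lRich} applies. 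The main technical point is the transfinite construction in Case~2: the system $\{y_\alpha\}$ is in general not minimal (later vectors may enter the closed span of earlier ones together with $y_\alpha$), yet the weaker property ``$y_\alpha\notin[y_\beta\setsep\beta<\alpha]$'' suffices precisely because absoluteness turns $M\cap\omega_1$ into an initial segment of $\omega_1$, restricting the relevant span to predecessors of $\alpha$ whenever $\alpha\notin M$.
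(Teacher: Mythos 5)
Your proposal is correct and follows essentially the same route as the paper: Case 1 uses the contrapositive of minimality (so the implication holds for arbitrary sets $M$), and Case 2 builds a long sequence $\{y_\alpha\}_{\alpha<\omega_1}$ with $y_\alpha\notin[y_\beta\setsep\beta<\alpha]$ by transfinite recursion and then uses Lemma~\ref{l:predp} to see that $\omega_1\cap M$ is an initial segment $\gamma$, reducing the span condition to $\alpha<\gamma$. The only difference is that you spell out the transfinite construction (which the paper leaves as ``easily constructed''), and that is done correctly.
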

\begin{proof}It is enough to verify that in both cases the assumptions of Lemma \ref{lRich} are satisfied.

First, let us assume that $\{x_i\}_{i\in I}$ is a fundamental minimal system. Let us fix an arbitrary set $J\subset I$. Then
$$i\notin J\Rightarrow x_i\notin[x_j\setsep j\in J].$$
Hence, the assumption of Lemma \ref{lRich} is obviously satisfied ($M$ need not be a suitable model but an arbitrary set).

Now, let us assume that $\dens X = \aleph_1$ and fix a family of vectors $\{x_\alpha\}_{\alpha<\omega_1}$ such that $[x_\alpha\setsep \alpha<\omega_1] = X$ and $x_\alpha\notin[x_\beta\setsep \beta<\alpha]$ for every $\alpha < \aleph_1$. Such a system can be easily constructed by transfinite induction.

Let us fix a list of formulas $\varphi_1,\dots,\varphi_n$ which contains the formulas from Lemma~\ref{l:predp} and a countable set $Y$ containing the set $Y_0$ from Lemma~\ref{l:predp} and the mapping $\alpha\mapsto x_\alpha$.
Let $M\prec(\varphi_1,\ldots,\varphi_n;Y)$ be arbitrary.

Then $\omega_1\in M$ (as the domain of the above mentioned mapping due to Lemma~\ref{l:predp}) and, moreover, for each $\alpha\in \omega_1\cap M$ we have $\alpha\subset M$ (again by Lemma~\ref{l:predp}). Set $\gamma=\sup \omega_1\cap M$.
Then in fact $\gamma=\omega_1\cap M$. Therefore for any $\alpha\in\omega_1$ we have
$$x_\alpha\in [x_\beta:\beta\in\omega_1\cap M] \Leftrightarrow  x_\alpha\in [x_\beta:\beta<\gamma]
\Leftrightarrow \alpha<\gamma\Leftrightarrow \alpha\in M.$$
Indeed, the first and the third equivalences follow from the above proven fact $\gamma=\omega_1\cap M$ and the second one follows from the properties of the family  $\{x_\alpha\}_{\alpha<\omega_1}$.
This verifies the assumption of Lemma~\ref{lRich}, so the proof is completed.\end{proof}

Let us recall that it is undecidable in ZFC whether every Banach space with $\dens X = \aleph_1$ has a fundamental minimal system; see e.g. \cite[Section 4.4]{hajek} and \cite[Corollary 6]{todo}. We know only one example of a Banach space with $\dens X > \aleph_1$ and without a fundamental minimal system. It is the space $\ell_\infty^c(\Gamma)$ for an index set $\Gamma$ of cardinality greater than continuum (we denote by $\ell_\infty^c(\Gamma)$ the closed subspace of $\ell_\infty(\Gamma)$ consisting of vectors of countable support); see e.g. \cite[Theorem 4.26]{hajek}. However, even in this case it is not clear to the authors whether suitable models generate nice rich families. Hence, the following question seems to be open.

\begin{question}\label{q2}
Do suitable models generate nice rich families in every Banach space? Do suitable models generate nice rich families in $\ell_\infty^c(\Gamma)$?
\end{question}

\section{Separable reduction theorems}

In this section we investigate the relationship between the ``method of suitable models'' and the ``method of rich families'' in separable determination theorems. In order to formulate our results as theorems we need the following precise definitions.
They are necessary to substitute metamathematical notions ``property'' or ``statement''. They do not cover all the conceivable situations but they cover the important cases where separable reductions were investigated.

\begin{defin} Let $\phi(X,A,f,y_1,\ldots,y_k)$ be a statement concerning the Banach space $X$, the set $A$ and the function $f$. More precisely, we consider  $\phi(X,+,\cdot,\|\cdot\|,A,f,y_1,\ldots,y_k)$, where
\begin{equation*}\begin{split}
\phi(X,+,\cdot,\|\cdot\|,A,f,y_1,\ldots,y_k) = &\; \langle X,+,\cdot,\|\cdot\|\rangle\text{ is a Banach space}, A\subset X, f\text{ is a function,}\\ & \dom(f)\subset X\text{ and }\tilde{\phi}(X,+,\cdot,\|\cdot\|,A,f,y_1,\ldots,y_k)
\end{split}\end{equation*}
for a formula $\tilde{\phi}(u_1,u_2,u_3,u_4,v,w,y_1,\ldots,y_k)$ with all free variables shown.

Let us fix some constants $C_1,\ldots,C_k$, a Banach space $X$, $A\subset X$ and  a function $f$ with $\dom(f)\subset X$. We say the statement \emph{$\phi(X,A,f,C_1,\ldots,C_k)$ is separably determined by the method of rich families}, if there exists a rich family $\F$ of separable subspaces of $X$ such that, for every $F\in\F$,
$$\phi(X,A,f,C_1,\ldots,C_k)\Longleftrightarrow \phi(F,A\cap F,f\restriction_{F},C_1,\ldots,C_k).$$

We say the statement \emph{$\phi(X,A,f,C_1,\ldots,C_k)$ is separably determined by the method of suitable models}, if there exists a countable set $Y$ and a finite list of formulas $\varphi_1,\ldots,\varphi_n$ such that whenever $M\prec(\varphi_1,\ldots,\varphi_n;Y)$ and $\{X,+,\cdot,\|\cdot\|,A,f,C_1,\ldots,C_k\}\subset M$, then
$$\phi(X,A,f,C_1,\ldots,C_k)\Longleftrightarrow \phi(X_M,A\cap X_M,f\restriction_{X_M},C_1,\ldots,C_k).$$
\end{defin}

In order to see that statements separably determined by the method of rich families are separably determined by the method of suitable models, we need the following.

\begin{proposition}\label{pRichImpliesModel}There exists a list of formulas $\varphi_1,\ldots,\varphi_n$ and a countable set $Y$ such that for every $M\prec(\varphi_1,\ldots,\varphi_n;\;Y)$ the following holds: Let $\langle X,+,\cdot,\|\cdot\|\rangle$ be a Banach space and $\F$ a rich family of separable subspaces of $X$. Then whenever $\{X,+,\cdot,\|\cdot\|,\F\}\subset M$, $X_M\in\F$.
\end{proposition}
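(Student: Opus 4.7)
The plan is to build, outside $M$, an increasing sequence $\{F_n\}_{n\in\en}$ of elements of $\F\cap M$ whose closed union equals $X_M$; condition (ii) of a rich family will then yield $X_M\in\F$. To make the construction available inside $M$, I would enlarge the list of formulas from Lemma~\ref{l:predp} with two existentials, and include $\omega$, $\emptyset$ and $Y_0$ in $Y$.

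Specifically, I adjoin (together with all subformulas) the formulas
$$(*)\quad \exists F\,(F\in\F\wedge G\subset F\wedge x\in F),$$
$$(\dagger)\quad \exists e\,\bigl(e\text{ is a function},\ \dom(e)=\omega,\ \rng(e)\subset F,\ \rng(e)\text{ is dense in }F\bigr).$$
Assuming $\{X,+,\cdot,\|\cdot\|,\F\}\subset M$, I enumerate the countable set $X\cap M$ in $V$ as $\{x_k:k\in\en\}$ (with repetitions if finite). Setting $F_0=\emptyset$, I build $F_n$ by induction: given $F_{n-1}\in\F\cap M$ (or $\emptyset$ when $n=1$), the formula $(*)$ with parameters $G=F_{n-1}$ and $x=x_n$ (both lying in $M$) holds in $V$, since richness~(i) applied to the separable subspace $\ov{\sspan(F_{n-1}\cup\{x_n\})}$ produces a witness in $\F$; absoluteness for $M$ then delivers $F_n\in\F\cap M$ with $F_{n-1}\subset F_n$ and $x_n\in F_n$.

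Setting $G:=\ov{\bigcup_n F_n}\in\F$ (by condition (ii)), the inclusion $X_M\subset G$ is immediate since $x_k\in F_k\subset G$ for every $k$. For the reverse inclusion, fix $n$: formula $(\dagger)$ with parameter $F=F_n$ is true in $V$ because $F_n$ is a separable Banach space, so by absoluteness there is such an $e\in M$. Lemma~\ref{l:predp} gives $\omega=\dom(e)\in M$ and, being countable, $\omega\subset M$; moreover $e[M]\subset M$. Hence $\rng(e)=e[\omega]\subset X\cap M$, and so $F_n=\ov{\rng(e)}\subset X_M$. This yields $G\subset X_M$ and completes the argument.

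The main obstacle is only syntactic bookkeeping: one has to spell out $(*)$, $(\dagger)$ and the atomic predicates ``$F\in\F$'', ``$G\subset F$'', ``$\rng(e)$ is dense in $F$'', etc., in the primitive set-theoretic language of Lemma~\ref{lKunen} and take the corresponding subformula closure. Once this is arranged, the three invocations of absoluteness above go through without further difficulty.
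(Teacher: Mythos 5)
Your proposal is correct and follows essentially the same route as the paper's proof: the paper first shows $X_M=\ov{\bigcup\{F\setsep F\in\F\cap M\}}$ via the absoluteness of ``$\exists F\,(F\in\F\wedge x\in F)$'' and of ``$\exists D\,(D$ is a countable dense set in $F)$'', then uses the (absoluteness-derived) up-directedness of the countable family $\F\cap M$ to extract an increasing sequence and applies condition (ii) of richness. You merely fuse the covering step and the up-directedness step into a single recursion governed by your combined formula $(*)$, which is a harmless reorganization of the same argument.
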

\begin{proof} Fix a subformula-closed list of formulas $\varphi_1,\ldots,\varphi_n$ containing formulas from Lemma~\ref{l:predp}, the axiom of power set and the formulas marked below by $(*)$. Fix a countable set $Y$ containing the set $Y_0$ from Lemma~\ref{l:predp}.
 Fix $M\prec(\varphi_1,\ldots,\varphi_n;\;Y)$, $X$ and $\F$ as above. Assume that $\{X,+,\cdot,\|\cdot\|,\F\}\subset M$.

First, we will show that
\begin{equation}\label{eq:x_m}X_M = \ov{\bigcup\{F\setsep F\in\F\cap M\}}.\end{equation}
Indeed, let us fix $x\in X\cap M$. Then, using the absoluteness of the formula (and its subformula)
$$\exists F\quad (F\in \F\;\wedge\;x\in F),\eqno{(*)}$$
there exists $F\in\F\cap M$ such that $x\in F$. Hence,
$$X_M = \ov{X\cap M}\subset\ov{\bigcup\{F\setsep F\in\F\cap M\}}.$$
In order to see the opposite inclusion holds, let us fix $F\in\F\cap M$. Using the absoluteness of the formula (and its subformula)
$$\exists D\quad (D\text{ is a countable dense set in }F),\eqno{(*)}$$
there exists a countable set $D\in M$ such that $\ov{D} = F$. Hence, $D\subset M$ and $F = \ov{D}\subset X_M$. Consequently, ~\eqref{eq:x_m} holds.

Now, $\F\cap M$ is a nonempty, up-directed set. Indeed, $\F\cap M\neq\emptyset$ follows from the absoluteness of the formula (and its subformula)
$$\exists F\quad (F\in\F).\eqno{(*)}$$
Let us fix $F, G\in\F\cap M$. By the absoluteness of the formula (and its subformula)
$$\exists H\quad (H\in\F\;\wedge\;F\cup G\subset H),\eqno{(*)}$$
there exists $H\in\F\cap M$ such that $F\cup G\subset H$. Thus, $\F\cap M$ is an up-directed set.

Hence, there is an increasing sequence of sets $\{F_n\}_{n\in\en}$ from $\F\cap M$ such that $\bigcup \F\cap M = \bigcup_{n\in\en}F_n$. Consequently,
$$X_M = \ov{\bigcup\{F\setsep F\in\F\cap M\}} = \ov{\bigcup_{n\in\en}F_n}\in\F.$$
\end{proof}

Now we are ready to formulate and prove the main result.

\begin{thm}\label{tRichAModel}Let $X$ be a Banach space, $A\subset X$ and $f$ a function with $\dom(f)\subset X$. Let $\phi(X,A,f,C_1,\ldots,C_k)$ be a statement concerning the Banach space $X$, the set $A$, the function $f$ and constants $C_1,\ldots,C_k$. Consider the following conditions
\begin{enumerate}[\upshape (i)]
	\item $\phi(X,A,f,C_1,\ldots,C_k)$ is separably determined by the method of suitable models.
	\item $\phi(X,A,f,C_1,\ldots,C_k)$ is separably determined by the method of rich families.
\end{enumerate}
Then (ii) implies (i). Moreover, if $X$ have a fundamental minimal system $\{x_i\}_{i\in I}$ or $\dens X = \aleph_1$, then both conditions are equivalent.
\end{thm}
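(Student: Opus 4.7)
The proof should be essentially a bookkeeping combination of Proposition~\ref{pRichImpliesModel} (for the direction (ii)$\Rightarrow$(i)) and Theorem~\ref{tRich} (for the reverse direction under the extra assumption). Nothing genuinely new needs to be produced; the task is to line up the data properly.

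For (ii)$\Rightarrow$(i), I would start from a rich family $\F$ witnessing separable determination by rich families for $\phi(X,A,f,C_1,\ldots,C_k)$. Let $\varphi_1,\ldots,\varphi_n$ and $Y$ be the list of formulas and countable set produced by Proposition~\ref{pRichImpliesModel}; append to $Y$ the finitely many objects $X,+,\cdot,\|\cdot\|,A,f,C_1,\ldots,C_k,\F$. The enlarged $Y$ is still countable. For any $M\prec(\varphi_1,\ldots,\varphi_n;Y)$, these objects lie in $M$, so Proposition~\ref{pRichImpliesModel} applies and yields $X_M\in\F$. The rich-family property of $\F$ then gives
$$\phi(X,A,f,C_1,\ldots,C_k)\iff \phi(X_M,A\cap X_M,f\restriction_{X_M},C_1,\ldots,C_k),$$
which is exactly separable determination by the method of suitable models.

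For (i)$\Rightarrow$(ii) under the hypothesis that $X$ has a fundamental minimal system or $\dens X=\aleph_1$, I would invoke Theorem~\ref{tRich}: suitable models generate nice rich families in $X$. Let $\varphi_1,\ldots,\varphi_n$ and $Y$ witness separable determination of $\phi(X,A,f,C_1,\ldots,C_k)$ by suitable models; again adjoin $X,+,\cdot,\|\cdot\|,A,f,C_1,\ldots,C_k$ to $Y$. Applying the defining property of ``suitable models generate nice rich families in $X$'' to this enlarged pair $(\varphi_1,\ldots,\varphi_n;Y)$ produces a family $\M$ of suitable models such that $\F:=\{X_M:M\in\M\}$ is a rich family of separable subspaces of $X$, and each $M\in\M$ satisfies $M\prec(\varphi_1,\ldots,\varphi_n;Y)$ together with $\{X,+,\cdot,\|\cdot\|,A,f,C_1,\ldots,C_k\}\subset M$. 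The hypothesis (i) then gives for every $F=X_M\in\F$
$$\phi(X,A,f,C_1,\ldots,C_k)\iff \phi(F,A\cap F,f\restriction_{F},C_1,\ldots,C_k),$$
which is precisely separable determination by the method of rich families.

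The only real issue is making sure the lists of formulas and the countable set $Y$ in each direction are large enough to (a) satisfy the absoluteness requirements of Lemma~\ref{l:predp}, Proposition~\ref{pRichImpliesModel} and the definition of ``generating nice rich families'', and (b) contain the finitely many parameters $X,+,\cdot,\|\cdot\|,A,f,C_1,\ldots,C_k$ (and $\F$ in the first direction) so that the respective biconditionals can actually be invoked. Since only finitely many objects and formulas are adjoined each time, countability of $Y$ and finiteness of the formula list are preserved, so there is no genuine obstacle; the argument reduces to applying Proposition~\ref{pRichImpliesModel} and Theorem~\ref{tRich} as black boxes.
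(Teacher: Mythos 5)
Your proposal is correct and follows essentially the same route as the paper: (ii)$\Rightarrow$(i) by arranging that $\F\in M$ and invoking Proposition~\ref{pRichImpliesModel} to get $X_M\in\F$, and (i)$\Rightarrow$(ii) by invoking Theorem~\ref{tRich} to produce a rich family of the form $\{X_M\setsep M\in\M\}$. The only (harmless) difference is how $\F\in M$ is secured in the first direction: you place the fixed rich family $\F$ directly into the countable set $Y$, whereas the paper adds an existential formula asserting the existence of such a rich family and recovers one inside $M$ by absoluteness; both devices are legitimate under the paper's definitions.
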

\begin{proof}Let us assume that (ii) holds. Let us fix the countable set $Y$ and the list of formulas $\varphi_1,\ldots,\varphi_n$ from Proposition \ref{pRichImpliesModel} and add to them the following formula and its subformulas
\begin{equation}\label{eq:family}\begin{split}\exists\F\quad (\F & \text{ is a rich family of separable subspaces of $X$ such that, for every $F\in\F$, }\\ & \phi(X,A,f,C_1,\ldots,C_k)\Longleftrightarrow \phi(F,A\cap F,f\restriction_{F},C_1,\ldots,C_k)).\end{split}\end{equation}
Denote such an extended list of formulas by $\phi_1,\ldots,\phi_k$. Fix $M\prec(\phi_1,\ldots,\phi_k;Y)$ with $\{X,+,\cdot,\|\cdot\|,A,f,C_1,\ldots,C_k\}\subset M$. By (ii) and the absoluteness of the formula ~\eqref{eq:family} (and its subformula), there exists a rich family $\F\in M$ of separable subspaces of $X$ such that, for every $F\in\F$,
$$\phi(X,A,f,C_1,\ldots,C_k)\Longleftrightarrow \phi(F,A\cap F,f\restriction_{F},C_1,\ldots,C_k).$$
By Proposition \ref{pRichImpliesModel}, $X_M\in\F$. Hence,
$$\phi(X,A,f,C_1,\ldots,C_k)\Longleftrightarrow \phi(X_M,A\cap X_M,f\restriction_{X_M},C_1,\ldots,C_k).$$
As $M$ was an arbitrary set with $M\prec(\phi_1,\ldots,\phi_k;Y)$ and $\{X,+,\cdot,\|\cdot\|,A,f,C_1,\ldots,C_k\}\subset M$, (i) holds.

Now, let us prove (i)$\Rightarrow$(ii) in the ``moreover'' part. By (i), there is a countable set $Z = Y\cup\{X,+,\cdot,\|\cdot\|,A,f,C_1,\ldots,C_k\}$ and a finite list of formulas $\varphi_1,\ldots,\varphi_n$ such that, for every $M\prec(\varphi_1,\ldots,\varphi_n;Z)$,
$$\phi(X,A,f,C_1,\ldots,C_k)\Longleftrightarrow \phi(X_M,A\cap X_M,f\restriction_{X_M},C_1,\ldots,C_k).$$
By Theorem \ref{tRich}, there exists a family $\M$ such that, for every $M\in\M$, $M\prec(\varphi_1,\ldots,\varphi_n;Z)$ and  $\F = \{X_M\setsep M\in\M\}$ is a rich family of separable subspaces in $X$. Hence, for every $F\in\F$,
$$\phi(X,A,f,C_1,\ldots,C_k)\Longleftrightarrow \phi(F,A\cap F,f\restriction_F,C_1,\ldots,C_k).$$
Consequently, (ii) holds.
\end{proof}

Let us show some consequences of the above. First, applying Theorem \ref{tRichAModel} on a result from \cite{cuthrmoutil}, we get the following; see e.g. \cite{cuthrmoutil} for the definition of a $\sigma$-upper porous set.

\begin{cor}\label{cPorous}Let $X, Y$ be Banach spaces and $f:X\to Y$ be a function. Let $X$ have a fundamental minimal system or $\dens X = \aleph_1$. Then there exists a rich family $\F$ of separable spaces of $X$ such that for every $F\in\F$ the following two conditions are equivalent:
\begin{enumerate}[(i)] 
  \item the set of the points where $f$ is not Fr\'echet differentiable is $\sigma$-upper porous,
  \item the set of the points where $f\restriction_F$ is not Fr\'echet differentiable is $\sigma$-upper porous in $F$.
\end{enumerate}
\end{cor}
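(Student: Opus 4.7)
The plan is to derive Corollary \ref{cPorous} directly from Theorem \ref{tRichAModel} by viewing the claim as a statement about the Banach space $X$, with the target space $Y$ and the function $f$ entering as parameters. Concretely, I would let $\phi(X,A,f,Y)$ denote the property ``the set of points at which $f$ is not Fr\'echet differentiable is $\sigma$-upper porous,'' where $A$ is taken as a dummy subset (e.g.\ $A=\emptyset$) and $Y$ is treated as one of the constants $C_i$ in the definition preceding Theorem \ref{tRichAModel}. Once this bookkeeping is in place, only the two abstract halves of the paper's machinery need to be glued together.

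The first step is to invoke the separable determination result of \cite{cuthrmoutil} for $\sigma$-upper porosity of the non-Fr\'echet-differentiability set. It guarantees that $\phi$ is separably determined by the method of suitable models; that is, there exist a finite list of formulas $\varphi_1,\ldots,\varphi_n$ and a countable set $Z$ such that, for every $M\prec(\varphi_1,\ldots,\varphi_n;Z)$ containing $\{X,+,\cdot,\|\cdot\|,f,Y\}$, one has
$$\phi(X,\emptyset,f,Y)\Longleftrightarrow \phi(X_M,\emptyset,f\restriction_{X_M},Y).$$

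The second step is then to appeal to the ``moreover'' implication (i)$\Rightarrow$(ii) of Theorem \ref{tRichAModel}. Since $X$ is assumed either to carry a fundamental minimal system or to satisfy $\dens X=\aleph_1$, separable determination by the method of suitable models entails separable determination by the method of rich families. This produces a rich family $\F$ of separable subspaces of $X$ for which the equivalence between (i) and (ii) of the corollary holds with any $F\in\F$ in place of $X_M$, which is exactly the conclusion sought.

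The main obstacle is verifying the first step: one must check that the separable reduction theorem in \cite{cuthrmoutil} can actually be framed as a two-sided equivalence of the specific syntactic form used in the definition of ``separably determined by the method of suitable models,'' with $Y$ playing the role of an external constant rather than part of the internal data being reduced. Once that translation is explicit, the remainder is a mechanical invocation of Theorem \ref{tRichAModel}.
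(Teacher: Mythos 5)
Your proposal is correct and follows essentially the same route as the paper: cite the separable determination of $\sigma$-upper porosity of the non-Fr\'echet-differentiability set by the method of suitable models from \cite{cuthrmoutil}, then apply the (i)$\Rightarrow$(ii) implication of Theorem \ref{tRichAModel} under the hypothesis of a fundamental minimal system or density $\aleph_1$. The bookkeeping you flag (treating $Y$ as an external constant $C_i$ and $A$ as a dummy set) is exactly the translation the paper performs implicitly.
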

\begin{proof}
Let us denote by $\phi(X,f,Y)$ the formula ``the set of the points where $f$ is not Fr\'echet differentiable is $\sigma$-upper porous in $X$''. By \cite{cuthrmoutil} (see the proof of Theorem 1.2 which is given just bellow the proof of Theorem 5.4), $\phi(X,f,Y)$ is separably determined by the method of suitable models. By Theorem \ref{tRichAModel}, $\phi(X,f,Y)$ is separably determined by the method of rich families.
\end{proof}

Applying Theorem \ref{tRichAModel} on a result from \cite{gakub}, we get the following result concerning nonseparable Gurari\u{\i} spaces. A Banach space $X$ is said to be a Gurari\u{\i} space if, for every pair of finite-dimensional spaces $S\subset T$, for every isometric embedding $f:S\to X$ and for every $\eps > 0$, there exists
an $\eps$-isometric embedding $g:T\to X$ such that $g\restriction_S = f$. Let us recall there exists exactly one (up to isometry) separable Gurari\u{\i} space. For a survey on Gurari\u{\i} spaces, see e.g. \cite{gakub}.

\begin{cor}There exists a list of formulas $\phi_1,\ldots,\phi_n$ and a countable set $Y$ such that for every $M\prec(\phi_1,\ldots,\phi_n;\;Y)$ the following holds: Let $X$ be a Banach space with $\{X,+,\cdot,\|\cdot\|\}\subset M$. Then $X$ is a Gurari\u{\i} space if and only if $X_M$ is a Gurari\u{\i} space.
\end{cor}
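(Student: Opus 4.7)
The plan is to deduce this corollary as a direct consequence of Theorem \ref{tRichAModel}, specifically the implication (ii)$\Rightarrow$(i). This direction requires no structural assumption on $X$ --- no fundamental minimal system and no restriction $\dens X=\aleph_1$ --- which matches the statement of the corollary, where $X$ is an arbitrary Banach space.

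The first step is to verify that ``$X$ is a Gurari\u{\i} space'' fits the syntactic template in the definition preceding Theorem \ref{tRichAModel}. Since the Gurari\u{\i} condition depends only on $\langle X,+,\cdot,\|\cdot\|\rangle$ and amounts to a formula quantifying over finite-dimensional subspaces, isometric embeddings and positive rationals, it is expressible in ZFC from $\langle X,+,\cdot,\|\cdot\|\rangle$ alone. We may therefore take $A=\emptyset$, $f$ the empty function, and omit the constants $C_1,\ldots,C_k$, obtaining a formula $\phi$ of the form required by the definition of ``separably determined''.

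The second step is to invoke \cite{gakub} for the fact that the Gurari\u{\i} property is separably determined by the method of rich families: for every Banach space $X$ there exists a rich family $\F$ of separable subspaces of $X$ such that $X$ is a Gurari\u{\i} space if and only if every $F\in\F$ is a Gurari\u{\i} space. This is the substance of the standard amalgamation and back-and-forth arguments presented in that paper.

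With these two inputs in place, the implication (ii)$\Rightarrow$(i) of Theorem \ref{tRichAModel} produces a countable set $Y$ and a finite list $\phi_1,\ldots,\phi_n$ of formulas with exactly the property claimed. The main subtlety I expect is confirming that $Y$ and the $\phi_i$ are genuinely uniform in $X$, rather than depending on the particular Banach space considered; inspection of the proof of Theorem \ref{tRichAModel} shows this is indeed the case, because there $Y$ and the $\phi_i$ arise from Proposition \ref{pRichImpliesModel} together with the formula \eqref{eq:family}, in which $X$ (and its Banach space structure) occurs as a free variable rather than being hard-coded into the list.
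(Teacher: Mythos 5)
Your proposal is correct and follows the same route as the paper: express the Gurari\u{\i} property as a statement $\phi(X)$ in the required form, cite \cite[Theorem 3.4]{gakub} for separable determination by rich families, and apply the implication (ii)$\Rightarrow$(i) of Theorem \ref{tRichAModel}. Your added observation about uniformity of $Y$ and the formulas in $X$ is a worthwhile point the paper leaves implicit, but it does not change the argument.
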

\begin{proof}Let us denote by $\phi(X)$ the formula ``$X$ is a Gurari\u{\i} space''. By \cite[Theorem 3.4]{gakub}, $\phi(X)$ is separably determined by the method of rich families. By Theorem \ref{tRichAModel}, $\phi(X)$ is separably determined by the method of suitable models.
\end{proof}

\begin{rem}Let $X$ be a Banach space, $A\subset X$, $f$ a function with domain in $X$ and $\phi(X,A,f)$ a statement determined by the method of suitable models. It is known to the authors that if $\phi(X,A,f)$ holds, then there does not exist a rich family of separable subspaces $\F$ such that, for every $F\in\F$, $\neg\phi(F,A\cap F,f\restriction_F)$ holds. The argument is as follows. Arguing by contradiction, let us fix such a rich family of separable subspaces $\F$. By the assumption, there is a countable set $Y$ and a finite list of formulas $\varphi_1\ldots,\varphi_n$ such that, whenever $M\prec(\varphi_1\ldots,\varphi_n;Y)$, $\phi(X_M,A\cap X_M,f\restriction_{X_M})$ holds. We inductively find sequences $(F_n)_{n\in\en}$ and $(M_n)_{n\in\en}$ such that
\begin{itemize}
	\item for every $n\in\en$, $M_n\prec(\varphi_1\ldots,\varphi_n;Y)$ and $F_n\in\F$;
	\item $M_1\subset M_2\subset \ldots$ and $F_1\subset X_{M_1}\subset F_2\subset X_{M_2}\subset\ldots$.
\end{itemize}
Put $M = \bigcup_{n=1}^\infty M_n$. It is easy to check that $M\prec(\varphi_1\ldots,\varphi_n;Y)$ and $X_M = \ov{\bigcup_{n=1}^\infty X_{M_n}}$; see, e.g., \cite{cuth}. Thus, $X_M = \ov{\bigcup_{n=1}^\infty F_n}\in\F$ and $\phi(X_M,A\cap X_M,f\restriction_{X_M})$ holds. This is a contradiction.

However, the following question remains open.
\end{rem}

\begin{question}\label{q3}Is the method of suitable models equivalent to the method of rich families? More precisely, does (i)$\Rightarrow$(ii) hold in Theorem \ref{tRichAModel} if we do not assume that $X$ has a fundamental minimal system or is of density $\aleph_1$? Does it hold at least for $\C(K)$ spaces? Does it hold for $\C(K)$ spaces, where $K$ is a Boolean space?
\end{question}

\section{Projectional skeletons}

In the last section we apply the previous results to give a characterization of spaces with a projectional skeleton. We prove that a projectional skeleton may be without loss of generality considered to be simple in the sense of the following definition.

\begin{defin}A \textit{simple projectional skeleton} in a Banach space $X$ is a family of bounded projections $\{P_F\}_{F\in\F}$ indexed by a rich family of separable subspaces $\F$ satisfying the following conditions:
\begin{enumerate}[\upshape (i)]
	\item for every $F\in\F$, $P_F(X) = F$;
	\item if $E\subset F$ in $\F$, then  $P_E = P_E\circ P_F = P_F\circ P_E.$
\end{enumerate}
Given $r\geq 1$, we say that $\{P_F\}_{F\in\F}$ is an \textit{simple r-projectional skeleton} if it is a simple projectional skeleton such that $\|P_F\|\leq r$ for every $F\in\F$.

We say that $\{P_F\}_{F\in\F}$ is a \textit{simple commutative projectional skeleton} if $P_E\circ P_F = P_F\circ P_E$ for any $E,F\in\F$.
\end{defin}

\begin{defin}Let $\mathfrak{s} = \{P_s\}_{s\in\Gamma}$ be a (simple) projectional skeleton in a Banach space $X$ and let $D(\mathfrak{s}) = \bigcup_{s\in\Gamma}P^*_s[X^*]$. Then we say that \textit{$D(\mathfrak{s})$ is induced by a (simple) projectional skeleton}.\end{defin}

\begin{thm}\label{tSkeleton}Let $X$ be a Banach space and let $D\subset X^*$ be an $r$-norming subspace ($r\geq 1$). The following properties are equivalent:
\begin{enumerate}[\upshape (i)]
	\item $X$ has a (commutative) $r$-projectional skeleton $\mathfrak{s}$ with $D = D(\mathfrak{s})$.
	\item $X$ has a simple (commutative) $r$-projectional skeleton $\mathfrak{s}$ with $D = D(\mathfrak{s})$.
\end{enumerate}
\end{thm}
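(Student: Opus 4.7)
The direction (ii) $\Rightarrow$ (i) is routine: take $\Gamma := \F$ ordered by inclusion. Conditions (i), (ii) of the projectional-skeleton definition follow from the two defining clauses of a rich family; condition (iii) is immediate from the simple-skeleton axioms; and for a countable chain $F_1 \subset F_2 \subset \cdots$ in $\F$ the closure $\overline{\bigcup_n F_n}$ lies in $\F$, providing both the supremum and the identity in condition (iv). Norm bound, commutativity, and $D(\mathfrak{s})=D$ transfer tautologically.

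For (i) $\Rightarrow$ (ii), let $\mathfrak{s} = \{P_s\}_{s\in\Gamma}$. The strategy is to reindex $\mathfrak{s}$ via elementary submodels. Since any Banach space with a projectional skeleton admits a strong Markushevich basis and in particular a fundamental minimal system, Theorem \ref{tRich} applies to $X$. Apply it to a list of formulas and a countable $Y$ containing $\Gamma$, the map $s\mapsto P_s$, $D$, and the absoluteness witnesses needed below; this yields a nice rich family $\M$ of suitable models. For each $M\in\M$, the set $\Gamma\cap M$ is countable and up-directed, hence has a cofinal chain whose supremum $t_M\in\Gamma$ exists by condition (iv) of $\mathfrak{s}$. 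Set $P_M := P_{t_M}$. A standard absoluteness argument (every $x\in X\cap M$ lies in some $P_sX$ with $s\in\Gamma\cap M$, and each $P_{s_n}X$ has a countable dense subset lying in $M$) gives $P_MX=X_M$ and independence of $P_M$ from the chosen chain.

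Because clause (iii) of the nice rich family makes $M\mapsto X_M$ injective on $\M$, the assignment $P_F := P_M$ for $F=X_M\in\F:=\{X_M:M\in\M\}$ is unambiguous. The simple-skeleton conditions now fall into place: $P_FX=F$ and $\|P_F\|\le r$ by construction, and $E=X_{M_1}\subset F=X_{M_2}$ forces $M_1\subset M_2$, hence $\Gamma\cap M_1\subset\Gamma\cap M_2$ and $t_{M_1}\le t_{M_2}$, so condition (iii) of $\mathfrak{s}$ delivers $P_E=P_EP_F=P_FP_E$. Commutativity of $\mathfrak{s}$ is inherited.

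The main technical hurdle is the equality $D(\mathfrak{s}')=D$. The inclusion $D(\mathfrak{s}')\subset D$ is immediate, since each $P_F$ is a $P_{t_M}$ from $\mathfrak{s}$. For $D\subset D(\mathfrak{s}')$, given $s\in\Gamma$ I need $M\in\M$ with $P_s^*[X^*]\subset P_{t_M}^*[X^*]$; condition (iii) of $\mathfrak{s}$ makes this automatic as soon as $s\le t_M$, which in turn is guaranteed by $s\in M$. In the commutative case the weaker $P_sX\subset X_M$ already suffices, since commutativity yields $P_sP_{t_M}=P_{t_M}P_s=P_s$, and richness of $\F$ hands me such an $M$. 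In the general case, one must design the Skolem-function data so that the family $\{\Gamma\cap M:M\in\M\}$ collectively covers $\Gamma$, while still respecting clause (iii) of the nice rich family that rests on the fundamental-minimal-system parametrization. This bookkeeping is the delicate part, and is where I expect most of the work to sit; the self-contained alternative proof that the authors advertise presumably short-circuits this by building the simple skeleton directly from $\mathfrak{s}$ via a cofinal sub-indexing rather than via submodels.
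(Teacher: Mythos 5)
Your outline coincides with the paper's first proof: (ii)$\Rightarrow$(i) by taking $\Gamma=\F$ ordered by inclusion, and (i)$\Rightarrow$(ii) by using the Markushevich basis to invoke Theorem \ref{tRich}, assigning to each model $M$ the projection $P_{t_M}$ with $t_M=\sup(\Gamma\cap M)$, and using clause (iii) of the nice rich family to make the reindexing by $\F=\{X_M\setsep M\in\M\}$ unambiguous and monotone. Your derivation of the simple-skeleton axioms from $M_1\subset M_2\Rightarrow t_{M_1}\le t_{M_2}$ is correct and is a legitimate variant of the paper's argument (the paper instead compares kernels).

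However, there is a genuine gap exactly where you place it: the equality $D=D(\mathfrak{s}')$ in the non-commutative case. Your plan requires, for each $s\in\Gamma$, a model $M\in\M$ with $s\le t_M$, i.e.\ $s\in M$; but the family $\M$ produced by Lemma \ref{lRich} consists of Skolem hulls $\psi(A\cup Z)$ indexed by countable subsets $A$ of the index set $I$ of the minimal system, and nothing in that construction forces a prescribed $s\in\Gamma$ into any hull. Mere richness only gives $P_sX\subset X_M$, and containment of ranges does not yield $P_s^*[X^*]\subset P_{t_M}^*[X^*]$ without commutativity -- it gives $P_{t_M}P_s=P_s$, hence $P_s^*=P_s^*P_{t_M}^*$, which is the wrong composition. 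Trying to enlarge the parametrization so that the hulls cover $\Gamma$ threatens precisely clause (iii) of the nice rich family ($X_M\subset X_N\Rightarrow M\subset N$), which is the delicate point the whole paper revolves around. The paper sidesteps covering $\Gamma$ entirely: by Kubi\'s's Lemma 14 the projection attached to a suitable model satisfies not only $P_MX=X_M$ but also $\ker(P_M)=(D\cap M)_\bot$, whence $P_M^*(X^*)=\ov{D\cap M}^{w^*}$; since any subspace induced by a projectional skeleton is countably closed, this gives $D(\mathfrak{s}')\subset D$, and Kubi\'s's Corollary 19 then upgrades the inclusion to equality. The kernel identity is the ingredient your argument never uses, and it is what closes the gap (it also gives the relation $P_E=P_E\circ P_F$ directly). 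Your guess about the self-contained second proof is accurate in spirit: it carries out the same construction by hand with a fixed well-ordering playing the role of the Skolem function, and it too records a countable norming set $D(A)$ alongside each index set precisely so that the kernels can be identified.
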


 As we have remarked above, it is claimed already in \cite{kubisSkeletonEkviv} that such a statement holds. However, the proof  contains a gap as it is not clear why $X_M\subset X_N$ should imply $M\subset N$ (for $M,N$ suitable models). Here, using the preceding results, we fill in the gap. Moreover, as this result could interest broader audience (not familiar with elementary submodels), we give a proof which is absolutely self-contained. Of course, the idea of both proofs is the same. In the second proof we avoid the use of suitable models by giving in fact the proof of Theorem~\ref{tCountModel} in the concrete case.

\begin{proof}[\textbf{Proof 1 - using the method of suitable models}]It is obvious that every simple projectional skeleton is a projectional skeleton. Thus, (i) easily follows from (ii).

Let us assume $\mathfrak{s} = \{P_s\}_{s\in\Gamma}$ is an $r$-projectional skeleton in $X$ and $D = D(\mathfrak{s})$. It follows immediately from the proof of \cite[Lemma 14]{kubis} that there exist a countable set $Y$ and a finite list of formulas $\varphi_1,\ldots,\varphi_n$ such that whenever $M\prec(\varphi_1,\ldots,\varphi_n;Y)$, there exists a projection $P_M$ with $\|P_M\|\leq r$, $P_M(X) = X_M$ and $\ker(P_M) = (D\cap M)_\bot$.

Recall, that every space with a projectional skeleton has a Markushevich basis. Every Markushevich basis is a fundamental minimal system (this is immediate from the definition, see \cite[Definition 1.7]{hajek}). Hence, by Theorem \ref{tRich}, there exists a family $\M$ such that
\begin{itemize}
	\item[$\bullet$] for every $M\in\M$, $M\prec(\varphi_1,\ldots,\varphi_n;Y)$,
	\item[$\bullet$] the set $\F = \{X_M\setsep M\in\M\}$ is a rich family of separable subspaces in $X$ and
	\item[$\bullet$] $\forall M,N\in\M:\quad M\subset N \Longleftrightarrow \ov{X\cap M}\subset \ov{X\cap N}$.
\end{itemize}
$\{P_M\}_{M\in\M}$ can be equivalently indexed as $\{P_M\}_{X_M\in\F}$. Let us fix $M,N\in\M$ with $X_M\subset X_N$. Then obviously $P_M(X)\subset P_N(X)$ and $P_M = P_N\circ P_M$. Moreover, $M\subset N$. Thus, $\ker(P_M) = (D\cap M)_\bot\supset (D\cap N)_\bot = \ker(P_N)$; hence, $P_M = P_M\circ P_N$. Consequently, $\mathfrak{s'} = \{P_M\}_{X_M\in\F}$ is a simple $r$-projectional skeleton in $X$. It is clear that $D(\mathfrak{s'}) = \bigcup_{M\in\M}P_M^*(X^*) = \bigcup_{M\in\M}\ov{D\cap M}^{w^*}$. As any set induced by a projectional skeleton is countably closed, we have that $D(\mathfrak{s'})\subset D$. By \cite[Corollary 19]{kubis}, $D(\mathfrak{s'}) = D$.

It follows immediately from the proof of \cite[Lemma 14]{kubis} that, for every $M\in\M$, projection $P_M$ equals $P_s$ for some $s\in\Gamma$ (more precisely, $s = \sup(\Gamma\cap M)$). Hence, $\mathfrak{s'}$ is commutative whenever $\mathfrak{s}$ is commutative.
\end{proof}
\begin{proof}[\textbf{Proof 2 - not using the method of suitable models}]It is obvious that every simple projectional skeleton is a projectional skeleton. Thus, (i) easily follows from (ii).

Let us assume $\mathfrak{s} = \{P_s\}_{s\in\Gamma}$ is an $r$-projectional skeleton in $X$ and $D = D(\mathfrak{s})$. Fix a Markushevich basis $\{x_i\}_{i\in I}$ on $X$.

For every $A\in[I]^{\leq\omega}$ we will find sets $I(A)\in[I]^{\leq\omega}$, $D(A)\in[D]^{\leq\omega}$ and an up-directed set $\Gamma(A)\in[\Gamma]^{\leq\omega}$ such that if we put $t_A = \sup(\Gamma(A))$ and $X_A = [x_i\setsep i\in I(A)]$ then
\begin{enumerate}[\upshape (a)]
	\item\label{iCorrect} for every $A, B\in[I]^{\leq\omega}$, if $X_A = X_B$ then $\Gamma(A) = \Gamma(B)$;
	\item\label{iRich1} for every $A\in[I]^{\leq\omega}$, $A\subset I(A)$;
	\item\label{iRich2} for every sequence of sets $(A_n)_{n=1}^\infty$ from $[I]^{\leq\omega}$, $X_{\bigcup_{n\in\en} A_n} = \ov{\bigcup_{n\in\en} X_{A_n}}$;
	\item\label{iProj} for every $A\in[I]^{\leq\omega}$, $\rng(P_{t_A}) = X_A$ and $\ker(P_{t_A}) = D(A)_\bot$;	
	\item\label{iKer} for every $A, B\in[I]^{\leq\omega}$, if $X_A\subset X_B$ then $D(A)\subset D(B)$.
\end{enumerate}
Let us first verify that $\F = \{X_A\setsep A\in[I]^{\leq\omega}\}$ is a rich family of separable subspaces and $\mathfrak{s}'=\{P_F\}_{F\in\F}$ is a simple $r$-projectional skeleton, where $P_{X_A} = P_{t_A}$ for every $X_A\in\F$ (by ~\eqref{iCorrect}, $P_{X_A}$ is well-defined), $D = D(\mathfrak{s}')$ and the skeleton $\mathfrak{s}'$ is commutative whenever $\mathfrak{s}$ is commutative.

Having a separable subspace $Y$ find $A\in[I]^{\leq\omega}$ such that $Y\subset [x_i\setsep i\in A]$. By ~\eqref{iRich1}, $Y\subset X_A$. It follows immediately from ~\eqref{iRich2} that, for every increasing sequence $F_i$ in $\F$, $\overline{\bigcup_{i=1}^\infty F_i}$ belongs to $\F$. Hence, $\F$ is a rich family of separable subspaces. By ~\eqref{iProj}, $\rng(P_F) = F$ for every $F\in\F$. Fix $E,F\in\F$ with $E\subset F$. Then $P_E = P_F\circ P_E$, because $\rng(P_E)\subset \rng(P_F)$. By ~\eqref{iKer} and ~\eqref{iProj}, $\ker(P_F)\subset\ker(P_E)$; thus, $P_E = P_E\circ P_F$. Hence, $\mathfrak{s}'$ is a simple projectional skeleton. For every $F\in\F$, $\|P_F\|\leq r$ and $\mathfrak{s}'$ is commutative whenever $\mathfrak{s}$ is commutative. Moreover, $D(\mathfrak{s}') = \bigcup_{A\in[I]^{\leq\omega}}\ov{D(A)}^{w^*}$. As any set induced by a projectional skeleton is countably closed, we have that $D(\mathfrak{s'})\subset D$. By \cite[Corollary 19]{kubis}, $D(\mathfrak{s'}) = D$.

It remains to construct, for every $A\in[I]^{\leq\omega}$, sets $I(A)\in[I]^{\leq\omega}$, $D(A)\in[D]^{\leq\omega}$ and an up-directed set $\Gamma(A)\in[\Gamma]^{\leq\omega}$ satisfying the conditions above. 
In the following we say a set $E\subset X^*$ is $r$-norming for $Z\subset X$ if, for every $x\in Z$, $\|x\|\leq r\sup\{x^*(x)\setsep x^*\in B_{X^*}\cap E\}$. 
Fix a well-ordering $\triangleleft$ on the set $D\cup \Gamma\cup [I]^{\leq\omega}$

Fix $A\in[I]^{\leq\omega}$. Now, we inductively define sequences of countable sets $(I_n^A)_{n=1}^\infty$, $(\Gamma_n^A)_{n=1}^\infty$, $(D_n^A)_{n=1}^\infty$ in the following way. We put $I_1^A = A$, $\Gamma_1^A = \emptyset$ and $D_1^A = \emptyset$. Let us assume that $(I_n^A)_{n=1}^k$, $(\Gamma_n^A)_{n=1}^k$ and $(D_n^A)_{n=1}^k$ were already defined. We put $X_k^\qe = \qe$-$\sspan\{x_i\setsep i\in I_k^A\}$. This a countable set. In order to satisfy ~\eqref{iProj} and ~\eqref{iRich1}, it is enough to define $I_{k+1}^A$, $\Gamma_{k+1}^A$ and $D_{k+1}^A$ in such a way that those sequences are increasing and
\begin{itemize}
	\item $D_{k+1}^A$ is $r$-norming for $X_k^\qe$,
	\item $\Gamma_{k+1}^A$ is an up-directed set satisfying $X_k^\qe\subset \bigcup_{s\in\Gamma_{k+1}^A}P_s(X)$ and $D_{k+1}^A\subset \bigcup_{s\in\Gamma_{k+1}^A}P_s^*(X^*)$,
	\item $I_{k+1}^A$ is a set satisfying $\bigcup_{s\in\Gamma_{k+1}^A}P_s(X)\subset [x_i\setsep i\in I_{k+1}^A]$.
\end{itemize}

However, in order to satisfy ~\eqref{iCorrect}, ~\eqref{iRich2} and ~\eqref{iKer}, this construction must be done in a more precise way. Now, we define
\begin{equation*}\begin{split}D_{k+1}^A &\; = D_k\cup \\ & \bigcup_{l\in\en}\bigcup_{x\in X_k^\qe}\{d\in D\setsep d\text{ is the $\triangleleft$-least element of $D$ with }\|d\|\leq 1\text{ and }\|x\|\leq r|d(x)| + 1/l\}.\end{split}\end{equation*}
Having defined $D_{k+1}^A$, we put $\Gamma_{k+1}^A = \bigcup_{m\in\en} \Gamma_{k+1,m}^A$ where $\Gamma_{k+1,m}^A$ are inductively defined as follows.
\begin{align*}
	\Gamma_{k+1,1}^A = \;& \Gamma_k\cup \bigcup_{x\in X_k^\qe}\{s\in\Gamma\setsep s \text{ is the $\triangleleft$-least element of $\Gamma$ satisfying }x\in P_s(X)\}\cup \\
	& \bigcup_{d\in D_{k+1}}\{s\in\Gamma\setsep s \text{ is the $\triangleleft$-least element of $\Gamma$ satisfying }d\in P_s^*(X^*)\},\\
	\Gamma_{k+1,m+1}^A = \;& \Gamma_{k+1,m}^A\cup\bigcup_{u,v\in\Gamma_{k+1,m}^A}\{s\in\Gamma\setsep s \text{ is the $\triangleleft$-least element of $\Gamma$ satisfying }s\geq u,v\}.	
\end{align*}
Finally, we put
\begin{equation*}\begin{split}I_{k+1}^A &\; = I_k^A\cup \\ & \bigcup_{s\in\Gamma_{k+1}^A}\{\cup S\setsep S\in[I]^{\leq\omega}\text{ is the $\triangleleft$-least element of $[I]^{\leq\omega}$ satisfying }P_s(X)\subset [x_i\setsep i\in S]\}.\end{split}\end{equation*}
Now $I(A) = \bigcup_{n\in\en}I_n^A$, $\Gamma(A) = \bigcup_{n\in\en}\Gamma_n^A$ and $D(A) = \bigcup_{n\in\en}D_n^A$. It follows immediately from the construction above that
\begin{enumerate}[\upshape (1)]
	\item $D(A)$ is $r$-norming for $X_A$,
	\item\label{iTwo} $\Gamma(A)$ is an up-directed set satisfying $X_A\subset \ov{\bigcup_{s\in\Gamma(A)}P_s(X)}$ and $D(A)\subset \bigcup_{s\in\Gamma(A)}P_s^*(X^*)$,
	\item\label{iThree} $\bigcup_{s\in\Gamma(A)}P_s(X)\subset [x_i\setsep i\in I(A)] = X_A$,
	\item\label{iFour} for every $A, B\in[I]^{\leq\omega}$, if $X_A\subset X_B$ then $I(A)\subset I(B)$, $D(A)\subset D(B)$ and $\Gamma(A)\subset \Gamma(B)$,
	\item\label{iFive} for every $A\in[I]^{\leq\omega}$, $A\subset I(A)$,
	\item\label{iSix} for every $A\in[I]^{\leq\omega}$, $I(A) = I(I(A))$,
	\item\label{iSeven} for every $A,B\in[I]^{\leq\omega}$ with $A\subset B$, $I(A)\subset I(B)$.
\end{enumerate}
Now, ~\eqref{iCorrect} and ~\eqref{iKer} follow from ~\eqref{iFour}; ~\eqref{iRich1} from ~\eqref{iFive}. From ~\eqref{iTwo} and ~\eqref{iThree} it follows that
$$P_{t_A}(X) = \ov{\bigcup_{s\in\Gamma(A)}P_s(X)} = X_A.$$
In order to see that $\ker(P_{t_A}) = D(A)_\bot$, let us fix $x\in\ker(P_{t_A})$ and $d\in D(A)$. By ~\eqref{iTwo}, there exists $s\in\Gamma(A)$ with $P_s^*(d) = d$; hence, $d(x) = P_s^*(d)(x) = d(P_s x) = d(P_s\circ P_{t_A} x) = d(0) = 0$ and $x\in D(A)_\bot$. Thus, $\ker(P_{t_A}) \subset D(A)_\bot$. Moreover, since $D_A$ is $r$-norming for $X_A = \rng(P_{t_A})$, $\rng(P_{t_A})\cap D(A)_\bot = \{0\}$. Consequently, $\ker(P_{t_A}) = D(A)_\bot$. We have verified that ~\eqref{iProj} holds.

It remains to verify ~\eqref{iRich2}. Let us fix a sequence of sets $(A_n)_{n=1}^\infty$ from $[I]^{\leq\omega}$. By ~\eqref{iSix} and ~\eqref{iSeven}, similarly as in the proof of Lemma \ref{lBasicSkolem} (iv), $I(\bigcup_n A_n) = \bigcup_{n=1}^\infty I(A_n)$. Hence,
$$X_{\bigcup_{n\in\en} A_n} = [x_i\setsep i\in I(\cup_{n\in\en} A_n)] = [x_i\setsep i\in \cup_{n\in\en} I(A_n)] = \ov{\cup_{n\in\en} [x_i\setsep i\in I(A_n)]} = \ov{\bigcup_{n\in\en} X_{A_n}}$$
and ~\eqref{iRich2} holds.
\end{proof}

\begin{rem}Inspecting the proof of Theorem \ref{tSkeleton}, it is immediate that we can without loss of generality assume the projections in a  projectional skeleton are all created from suitable models. More precisely, let $\varphi_1,\ldots,\varphi_n$ be a finite list of formulas and $Y$ a countable set. Let $X$ be a Banach space and let $D\subset X^*$ be an $r$-norming subspace ($r\geq 1$). Then the following assertions are equivalent:
\begin{enumerate}[\upshape (i)]
	\item $X$ has a (commutative) $r$-projectional skeleton $\mathfrak{s}$ with $D = D(\mathfrak{s})$.
	\item $X$ has a simple (commutative) $r$-projectional skeleton $\{P_F\}_{F\in\F}$ with $D = D(\mathfrak{s})$ such that, for every $F\in\F$, there exists $M\prec(\varphi_1,\ldots,\varphi_n;Y)$ satisfying $\rng P_F = X_M$ and $\ker P_F = (D\cap M)_\bot$.
\end{enumerate}
\end{rem}

\begin{rem}If $X$ has a commutative projectional skeleton, we may construct a simple commutative skeleton in a more transparent way. However, some nontrivial results are used. We refer reader to \cite{kubisKniha, kubisSmall, cuthSimul} where necessary definitions used bellow may be found. First, $X$ has a commutative 1-projectional skeleton under an equivalent renorming; hence, by \cite[Proposition 29]{kubis}, we may without loss of generality assume $(B_{X^*},w^*)$ has a commutative retractional skeleton. By \cite[Theorem 6.1]{kubisSmall}, $(B_{X^*},w^*)$ is a Valdivia compact space. For an arbitrary Valdivia compact $K$ it is easy to construct a simple commutative projectional skeleton in the space $\C(K)$.

Indeed, let us follow the ideas and notation used in the proof of \cite[Theorem 19.14]{kubisKniha}. Let $K\subset[0,1]^\kappa$ be such that $\Sigma(\kappa)\cap K$ is dense in $K$. We call a set $T\subset\kappa$ \emph{admissible} if $x\cdot\chi_T\in K$ for every $x\in K$ and we denote by $\Gamma$ the set of all countable admissible subsets of $\kappa$. For every $S\in\Gamma$, we define $r_S:K\to K$ by $r_S(x) = x\cdot\chi_{S}$, $x\in K$. Then $\{r_S\}_{S\in\Gamma}$ is a commutative retractional skeleton. Now, for every $S\in\Gamma$, we define $P_S:\C(K)\to\C(K)$ by $P_S(f) = f\circ r_S$, $f\in\C(K)$. Then $\mathfrak{s} = \{P_S\}_{S\in\Gamma}$ is a commutative projectional skeleton. Moreover, assuming (without loss of generality) that $S = T$ whenever $r_S = r_T$, it is not difficult to verify that $\rng(P_S)\subset \rng(P_T)$ if and only if $S\subset T$; hence, $\mathfrak{s}$ is a simple commutative projectional skeleton.

It follows that $\C(B_{X^*},w^*)$ has a simple projectional skeleton $\{P_S\}_{S\in\Gamma}$. By \cite[Proposition 3.1 and Lemma 4.4]{cuthSimul}, we may without loss of generality assume that $\{P_S\restriction_X\}_{S\in\Gamma}$ is a simple projectional skeleton in $X$.

We used nontrivial results \cite[Theorem 6.1]{kubisSmall} and \cite[Proposition 3.1]{cuthSimul}; however, the construction of the simple projectional skeleton itself comes from the Valdivia case which is easier to handle.
\end{rem}

\section*{Acknowledgements}

M. C\'uth was supported by the Grant No. 282511 of the Grant Agency of the Charles University in Prague, O. F. K.  Kalenda by the grant GA \v{C}R P201/12/0290.

\end{document}